\def\a{\alpha}
\def\g{\gamma}
\def\d{\delta}
\def\e{\epsilon}
\def\t{\theta}
\def\ie{\textit{i.e.}, }
\def\cf{\textit{cf.}}
\def\RR{\mathbb R}
\def\fcar{\mathds{1}}
\def\suchthat{\,|\,}
\def\donc{\Rightarrow}
\def\esp{\mathbf E}
\def\var{\mathbf{Var}}
\def\prob{\mathbf P}
\def\calN{\mathcal N}
\def\simiid{\overset{iid}{\sim}}
\def\nzeroun{\mathcal{N}(0,1)}
\newcounter{assumption}[section]
\begin{document}

\title{Minimax optimal estimators for general additive functional estimation}

\author{\name Olivier Collier \email olivier.collier@parisnanterre.fr \\
       \addr MODAL'X \\
       Universit\'e Paris-Nanterre\\
       Nanterre, 92000, France\\
       and\\
       CREST, ENSAE\\
       Palaiseau, 91120
       \AND
       \name La\"etitia Comminges \email comminges@ceremade.dauphine.fr \\
       \addr CEREMADE\\
       Universit\'e Paris-Dauphine\\
       Paris, 75775, France\\
       and\\
       CREST, ENSAE\\
       Palaiseau, 91120}

\editor{}

\maketitle

\begin{abstract}
	In this paper, we observe a sparse mean vector through Gaussian noise and we aim at estimating some additive functional of the mean in the minimax sense. More precisely, we generalize the results of \citep{CollierCommingesTsybakov2017,CollierCommingesTsybakov2019} to a very large class of functionals. The optimal minimax rate is shown to depend on the polynomial approximation rate of the marginal functional, and optimal estimators achieving this rate are built.
\end{abstract}

\begin{keywords}
  Minimax estimation, additive functional, sparsity, polynomial approximation
\end{keywords}

\section{Introduction}\label{section:introduction}

In the general problem of functional estimation, one is interested in estimating some quantity $\bF(\bt)$ where $\bt\in\bT$ is an unknown parameter and $\bF$ is a known function. Information on this quantity is provided by an observation $\by \sim \prob_\bt$, where $(\prob_\bt)_{\bt\in\bT}$ is some family of probability distributions.

An exhaustive bibliography on the subject of functional estimation is out of the scope of this paper, but typical examples include functionals of a density function, e.g. the integrals of its square \cite{BickelRitov1988}, of smooth functionals of its derivatives \cite{BirgeMassart1995} or of nonlinear functionals \cite{KerkyacharianPicard1996}.

In this work, we focus on the case where $\bt\in\RR^d$ is a finite vector and $\bF$ is an additive functional, \ie
\begin{equation}\label{additive:functional}
	\bF(\bt) = \sum_{i=1}^d F(\t_i),
\end{equation}
which has now been well studied in the literature. For example, in the case when $\prob_\bt$ is the multinomial distribution $\mathcal{M}(n, p_1,\ldots, p_d)$, Shannon's or Rényi's entropy, which correspond respectively to marginal functionals $F(t)=-t\log(t)$ and $F(t)=t^{\a}$, are considered in \cite{HanJiaoWeissman2015, JiaoVenkatHanWeissman2015,WuYang2016} among others. The distinct elements problem, \ie finding how many different colors are present among at most $d$ colored balls in an urn, can also be expressed in this form \cite{PolyanskiyWu2019,WuYang2018}. Moreover, the quadratic functional defined by $F(t)=t^2$ is key in the problem of signal detection \cite{CarpentierCollierCommingesTsybakovWang2018}, and when the vector $\bt$ is assumed to be sparse, \ie when most of its coefficients are assumed to be exactly $0$, it also plays a crucial role for noise variance estimation \cite{CommingesCollierNdaoudTsybakov2019}. Finally, robust estimation of the mean is shown in \cite{CollierDalalyan2019} to be related with a linear functional of the outliers.

Here, our aim is not to focus on some particular functional, but to exhibit optimal minimax rates over large classes of functionals. Furthermore, we consider the Gaussian mean model, \ie
\begin{equation}\label{Gaussian:mean:model}
	\by \sim \calN(\bt,I_d) \quad \donc \quad y_i = \t_i + \xi_i, \quad \xi_i \simiid \nzeroun
\end{equation}
and we measure the quality of an estimator by the minimax risk defined by
\begin{equation}\label{minimax:risk}
	\sup_{\bt\in \bT} \esp_{\bt}\big( \hat{\bF} - \bF(\bt) \big)^2,
\end{equation}
where $\bT$ is some set of parameters. This framework was also used in \cite{CollierCommingesTsybakov2017,CaiLow2011,CollierCommingesTsybakov2019}, where respectively the cases when $F(t)=t$ or $F(t)=t^2$, $F(t) = |t|$ and $F(t) = |t|^\g$ for $0<\g\le 1$ are studied. It is clear from the last two papers that for rapidly growing functionals, it is relevant to restrict the set of $\bt's$ to a bounded subset of~$\RR^d$. Therefore, we assume that each component of $\bt$ belongs to a
segment, which we take for simplicity sake in the form $[-M,M]$. Finally, we place ourselves in a sparse context, which means that we assume the number of nonzero coefficients of $\bt$ -- its $l_0$-norm -- to be bounded by a known quantity, and we define
\begin{equation}\label{set:Theta}
	\bT \triangleq \bT_{s,M} = \big\{ \bt\in\RR^d \suchthat \|\bt\|_0\le s, \|\bt\|_\infty \le M \big\}.
\end{equation}

In this paper, we build minimax rate-optimal estimators when the functional $F$ is not too regular in the sense of polynomial approximation and does not grow too fast, when $s$ is at least of the order of $\sqrt{d}$ and $M$ is at most of order $\sqrt{\log(s^2/d)}$, showing that the polynomial approximation based method developed in \cite{CollierCommingesTsybakov2019} can be extended to a very broad class of functionals. More precisely, we make the following assumptions, where we use the notation $\d_{K,M}$ that is introduced in~\eqref{def:delta} below:

\begin{enumerate}
	\item[\textbf{(A0)}] $F$ is continuous on $[-\sqrt{\log(s^2/d)},\sqrt{\log(s^2/d)}]$.
	\item[\textbf{(A0')}] $F$ is continuous on $[-\sqrt{\log(s)},\sqrt{\log(s)}]$.
	\item[\textbf{(A1)}] There exist positive real numbers $\e_1,C_1$ such that
	\begin{equation}
		\sup_{\sqrt{2\log(s^2/d)}\le M\le \sqrt{2\log(d)}} \frac{\|F-F(0)\|_{\infty,[-M,M]}}{e^{\e_1M^2}} \le C_1.
	\end{equation}
	\item[\textbf{(A2)}] There exist  positive real numbers $\e_2,C_2$ such that
	\begin{equation}
		\sup_{\sqrt{2\log(s^2/d)}\le M\le \sqrt{2\log(d)}} \frac{ \d^{-1}_{M^2,M} }{e^{\e_2M^2}} \le C_2.
	\end{equation}
	\item[\textbf{(A3)}] $\forall \a>0$, $\exists f_\a>0$ such that if $|1-K_1/K_2|\vee|1-M_1/M_2| \le \a$, then $$f_\a^{-1} \le \frac{\d_{K_1,M_1}}{\d_{K_2,M_2}} \le f_\a.$$
\end{enumerate}
We make the first assumption on the continuity of $F$ for simplicity sake. Indeed, it would not be hard to extend the result to the case of a functional that is piecewise continuous with a finite number of discontinuities, even if discontinuous functionals might not be very important in practice. The second assumption is very mild, since estimation of rapidly growing functionals leads to very large minimax rates, making such problems uninteresting in practice. However the third assumption is essential: it expresses how the polynomial approximation rate drives the quality of estimation of the associated additive functional. Assumption~(A2) thus requires that $F$ is not smooth enough to be very quickly approximated by polynomials. In Section~\ref{section:optimality}, we recall the relation between polynomial approximation and differentiability. Finally, the last assumption is convenient to show that our lower and upper bounds match up to a constant. We believe that it is satisfied for all reasonable functionals.

Our theorems allow to recover some of the results implied by \cite{CaiLow2011,CollierCommingesTsybakov2019}, but cover a large part of all possible functionals. Note that some papers have already tackled the problem of general functionals. In \cite{FukuchiSakuma2017}, the authors give optimal rates of convergence for additive functionals in the discrete distribution case, when the fourth-derivative of the marginal functional is close in sup-norm to an inverse power function. In \cite{KoltchinskiiZhilova2018}, the case of general, not necessarily additive, functionals is considered in the Gaussian mean model with arbitrary covariance matrix. However, their results differ significantly from ours since they consider minimax risk over all marginal functionals belonging to some relatively small set of bounded and smooth functions in the Hölder sense. For example, none of the results obtained in \cite{CollierCommingesTsybakov2017,CollierCommingesTsybakov2019,CaiLow2011} can be recovered. Finally, the minimax rate for even larger classes of functionals, under constraints in the form $\sum_{i=1}^d c(\t_i) \le 1$ which includes sparsity, is obtained in~\cite{PolyanskiyWu2019} in term of the quantity
\begin{equation}\label{def:chi2modulus}
	\sup_{\pi_1,\pi_2} \Big\{ \Big| \int F(\t)\,\pi_1(d\t) - \int F(\t)\,\pi_2(d\t) \Big| \suchthat \chi^2(\prob_{\pi_1},\prob_{\pi_2})\le \frac1d, \esp_{\pi_i} \sum_{i=1}^d c(\t_i) \le 1 \Big\}
\end{equation}
where $\prob_\pi = \int \prob_\t\,\pi(d\t)$, $\chi^2(\prob_{\pi_1},\prob_{\pi_2})$ is the chi-square divergence between probabilities $\prob_{\pi_1}$ and $\prob_{\pi_2}$ and the supremum is taken over all probability distributions on~$\bT$. Their theorems allow for example to recover the minimax rate from~\cite{CaiLow2011} when $\Theta$ is bounded,  and may also allow to get the minimax rates from this paper. However, they do not exhibit generic estimators achieving the minimax risk. This paper fills in this gap in some cases.

\subsection*{Organization of the paper}

In Section~\ref{section:estimateur}, we build rate-optimal estimators of the additive functional and assess their performance. In Section~\ref{section:optimality}, we prove their optimality up to constants, and discuss conditions under which Assumption~(A2) is satisfied. The proofs of the theorems are postponed to Section~\ref{section:proof:theorem}, while technical lemmas can be found in~Section~\ref{section:lemma}.

\section{Upper bounds}\label{section:estimateur}
	
\subsection{Polynomial approximation}

Here, we set the notation on polynomial approximation that will be used throughout this paper. First denote $\mathcal{P}_K$ the set of polynomials of degree at most $K$, then define the polynomial of best approximation of $F$ on $[a,b]$ by
\begin{equation}\label{def:PKM}
	P_{K,[a,b]} = \underset{{P\in\mathcal{P}_K}}{\operatorname{\arg\min}} \| F - P \|_{\infty,[a,b]}
\end{equation}
and the polynomial approximation rate by
\begin{equation}\label{def:delta}
	\d_{K,[a,b]} = \| F - P_{K,[a,b]} \|_{\infty,[a,b]}.
\end{equation}
In the following, we write $P_{K,M} = P_{K,[-M,M]}$, $\d_{K,M}=\d_{K,[-M,M]}$, and we decompose~$P_{K,M}$ in the canonical base as 
\begin{equation}\label{def:coefficients}
	P_{K,M} = \sum_{k=0}^K a_{k,K,M} X^k.
\end{equation}

\subsection{Definition of the estimator and main theorem}

First, we use the sample duplication trick to transform  observation $y_i$ into independent randomized observations $y_{1,i}, y_{2,i}$ while keeping the same mean. Let us consider random variables $z_1,\ldots,z_d\simiid \nzeroun$ independent of $\by$ and define
\begin{equation}
	y_{1,i} = y_i + z_i, \quad y_{2,i} = y_i - z_i,
\end{equation}
so that $y_{1,i}, y_{2,i}\simiid\calN(\t_i,2)$. Yet for convenience, we will assume that $y_{1,i}, y_{2,i}\simiid\calN(\t_i,1)$.

Then, we recall the definition of the Hermite polynomials $H_k$ defined by
\begin{equation}\label{def:Hermite}
	H_k(x) = (-1)^k e^{x^2/2}\frac{\partial^k}{\partial x^k} \big(e^{-x^2/2}\big),
\end{equation}
which have in particular the property that $\esp_{X\sim\calN(\t,1)} H_k(X) = \t^k$.

Finally, we define our estimator of $\bF(\bt)$ as 
\begin{equation}\label{def:estimateur}
	\hat{\bF} = \sum_{i=1}^d \hat{F}(y_{1,i},y_{2,i})
\end{equation}
where 
\begin{equation}
	\hat{F}(u,v) = \sum_{l=0}^{L}\hat{P}_{K_l,M_l}(u) \fcar_{t_{l-1}<|v|\le t_l} + \hat{P}_{K_{L+1},M_{L+1}}(u) \fcar_{t_{L}<|v|},
\end{equation}
and for an arbitrary constant $c>0$,
\begin{equation}\label{parametres}
	\begin{cases}
		\ \hat{P}_{K,M}(u) = \sum_{k=1}^K a_{k,K,M} H_k(u), \phantom{\Big()} \\
		\ M_l = 2^{l}\sqrt{2\log(s^2/d)}, \quad K_l = \frac{c}{8} M_l^2, \phantom{\Big()} \\
		\ t_l = M_l/2,\, t_{-1}=0, \phantom{\Big()} \\
		\ L \text{ is the largest integer such that } 2^L < \sqrt{\log(d)/\log(s^2/d)}, \phantom{\Big()}\\ 
		\ M_{L+1} = \sqrt{2\log(d)}.
	\end{cases}
\end{equation}	

The next theorem is a slight modification of Theorem~1 in~\cite{CollierCommingesTsybakov2019}. It states the performance of our estimator in the case when the signal $\bt$ is not too sparse.
\begin{theorem}\label{theorem_upperbound_dense}
	 Assume that $2\sqrt{d} \le s\le d$ and that $F$ satisfies Assumptions~(A1-A2) with $\e_1+\e_2$ small enough. Then the estimator defined in~(\ref{def:estimateur}) with small enough $c$, depending on $\e_1$ and $\e_2$,  satisfies
	\begin{equation}
		\sup_{\bt\in\bT_{s,\sqrt{2\log(d)}}} \esp_\bt \big(\hat{\bF}-\bF(\bt)\big)^2 \le C_3\, s^2\max_{l=0,\ldots,L+1} \d^2_{K_l,M_l},
	\end{equation}
	where $C_3$ is some  positive constant, depending only on $C_1$ and $C_2$. 
\end{theorem}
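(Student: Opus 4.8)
The plan is to bound the risk by decomposing the error $\hat{\bF}-\bF(\bt)$ into a bias term and a stochastic term, and then control each coordinatewise, exploiting the duplication trick which makes $y_{1,i}$ and $y_{2,i}$ independent. Since the estimator is a sum over $i$, write $\esp_\bt(\hat{\bF}-\bF(\bt))^2 = \var_\bt(\hat{\bF}) + (\esp_\bt\hat{\bF}-\bF(\bt))^2$. For the bias, note that conditionally on $y_{2,i}$ the selector $\fcar_{t_{l-1}<|y_{2,i}|\le t_l}$ is fixed, and $\esp_{\bt}[\hat P_{K,M}(y_{1,i})\mid y_{2,i}] = P_{K,M}(\t_i) - a_{0,K,M}$ because $\esp H_k(y_{1,i}) = \t_i^k$ and we dropped the constant term $k=0$. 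So the per-coordinate bias is a sum over $l$ of $\prob(t_{l-1}<|y_{2,i}|\le t_l)$ times $(P_{K_l,M_l}(\t_i) - a_{0,K_l,M_l} - F(\t_i))$, plus the tail term. The key point is that on the event $\{t_{l-1}<|y_{2,i}|\le t_l\}$ one has, with high probability, $|\t_i|\le M_l$ (since $M_l = 2t_l$ and $\xi_i$ is subgaussian), so $|P_{K_l,M_l}(\t_i) - F(\t_i)|\le \d_{K_l,M_l}$; the constant $a_{0,K_l,M_l}$ is handled either because it is absorbed when $\t_i=0$ contributes, or more precisely by noting $|a_{0,K,M}|\le \|F\|_{\infty,[-M,M]} + \d_{K,M}$ and then showing that for $\t_i=0$ the leading selector is $l=0$ and $P_{K_0,M_0}(0)-a_{0,K_0,M_0}-F(0) = 0 + (\text{something }O(\d))$; altogether the bias of each coordinate is $O(\d_{K_{l^\ast},M_{l^\ast}})$ for the relevant level $l^\ast$ plus an exponentially small mismatch term controlled via (A1) and (A2), using the Gaussian tail bounds to make $\prob(|y_{2,i}|$ in a wrong range$)$ beat the growth $e^{\e_1 M_l^2}$ of the polynomial coefficients — this is exactly where $\e_1+\e_2$ small and $c$ small enter.

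For the variance, since the coordinates are independent, $\var_\bt(\hat{\bF}) = \sum_{i=1}^d \var_\bt(\hat F(y_{1,i},y_{2,i})) \le \sum_i \esp_\bt \hat F(y_{1,i},y_{2,i})^2$, and I would split according to whether $\t_i = 0$ or $\t_i\neq 0$. There are at most $s$ nonzero coordinates; for each such coordinate the second moment is at most (a constant times) $\|F\|^2_{\infty}$ plus the fluctuation of the polynomial, which by the Hermite second-moment bound $\esp H_k(y_{1,i})^2 \le k!\, c_0^k$ for $|\t_i|\le M$ together with $|a_{k,K,M}|\le$ (coefficient bound from best-approximation theory, e.g. via Chebyshev/Markov-type inequalities giving $|a_{k,K,M}|\lesssim (C/M)^k\|F\|_{\infty,[-M,M]}$) gives, when $K = cM^2/8$ with $c$ small, a geometric series $\sum_k k!\,(c_0 C^2/M^2)^k$ that converges — so each nonzero coordinate contributes $O(e^{2\e_1 M_{L+1}^2}) = O(d^{2\e_1})$-ish, but more carefully $O(\d^{-2}_{\cdot})\cdot \d^2_{\cdot}$, i.e.\ $O(s^2\delta^2/d)$ per coordinate after using (A1)–(A2) to trade the polynomial norm against $\d^{-1}$ — hence $s$ such terms give $O(s^2\max_l\d^2_{K_l,M_l})$. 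For the $d-s$ zero coordinates, the selector forces $|y_{2,i}|$ to be large (at least $t_0 = \sqrt{\log(s^2/d)/2}$), an event of probability $\lesssim d/s^2$, and on the complement $\hat F = 0$; combined with the bound on the second moment of $\hat P_{K_l,M_l}(y_{1,i})$ at $\t_i=0$ this yields a contribution $\lesssim d \cdot (d/s^2)\cdot(\text{poly norm}^2) = O(s^2\max_l\d^2)$ again after the (A1)–(A2) trade-off, this time crucially using that the tiny probability $d/s^2$ compensates the growth $e^{2\e_1 M_l^2}$ because $M_l^2$ grows only logarithmically and $\e_1$ is small.

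The main obstacle, as in \cite{CollierCommingesTsybakov2019}, is the bookkeeping of the multi-scale truncation: one must simultaneously (i) show that summing over the levels $l=0,\ldots,L+1$ costs only a constant factor (a geometric-type decay across scales, so that $\max_l\d^2_{K_l,M_l}$ rather than $\sum_l$ appears — here Assumption (A3) is used to compare $\d$'s at neighboring scales), (ii) control the "leakage" where $|y_{2,i}|$ lands in a level $l$ whose interval $[-M_l,M_l]$ does not contain $\t_i$, which requires Gaussian concentration beating the exponential coefficient growth and is the reason for the hypotheses "$\e_1+\e_2$ small" and "$c$ small depending on $\e_1,\e_2$," and (iii) handle the tail level $L+1$ with $M_{L+1}=\sqrt{2\log d}$ so that the probability weights are $\le 1/d$ and the $d$-fold sum stays bounded by $s^2\max_l\d^2$. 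I would organize the proof as: first a lemma bounding $|a_{k,K,M}|$ and $\esp H_k(y)^2$ (deferred to Section~\ref{section:lemma}), then the bias estimate, then the variance estimate split into zero/nonzero coordinates, and finally assemble using (A1)–(A3); the delicate inequalities are the exponential-versus-Gaussian-tail trade-offs, which I would state as the place where the smallness of $\e_1+\e_2$ and $c$ is consumed.
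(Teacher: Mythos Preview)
Your overall strategy matches the paper's: bias--variance split, partition into $i\in S$ and $i\notin S$, and a trade-off between Gaussian tails and the growth of the polynomial coefficients. Two concrete points, however, are wrong and would make the argument fail as written.

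First, for the zero coordinates you write that ``the selector forces $|y_{2,i}|$ to be large (at least $t_0$) \ldots and on the complement $\hat F = 0$.'' This is false for the estimator defined in~\eqref{def:estimateur}: since $t_{-1}=0$, the level-$0$ indicator is $\fcar_{0<|y_{2,i}|\le t_0}$, which is active almost surely, and $\hat F(y_{1,i},y_{2,i})=\hat P_{K_0,M_0}(y_{1,i})$ there --- a genuinely random, nonzero quantity. There is no thresholding that annihilates the estimator on small $|y_{2,i}|$. The paper's mechanism for the $i\notin S$ variance is different: at level $l=0$ one bounds $\esp \hat P_{K_0,M_0}^2(\xi)\le A\|F\|_{\infty,[-M_0,M_0]}^2\,6^{K_0}$ directly (Lemma~\ref{lemma_CaiLow1}), and then uses (A1)--(A2) to get $\|F\|^2_{\infty,[-M_0,M_0]}6^{K_0}\d^{-2}_{K_0,M_0}\le A(s^2/d)^{4\e_1+4\e_2+c\log 6/4}\le A\,s^2/d$, so that $d$ such terms give $As^2\d^2_{K_0,M_0}$. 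No small-probability factor is available or needed at level~$0$; the Gaussian tail $e^{-t_{l-1}^2/2}$ only helps for $l\ge1$.

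Second, you invoke Assumption~(A3) to pass from $\sum_l$ to $\max_l$. But (A3) is not among the hypotheses of Theorem~\ref{theorem_upperbound_dense}, and the paper's proof never uses it here. The sums over $l$ are controlled instead by the geometric decay of the Gaussian tail across scales: for $l\ge1$ the factor $e^{-t_{l-1}^2/2}=(s^2/d)^{-4^{l-1}/4}$ beats the growth of $\|F\|_\infty^2 6^{K_l}\d_{K_l,M_l}^{-2}\le A(s^2/d)^{4^l(4\e_1+4\e_2+c\log6/4)}$ once $\e_1+\e_2$ and $c$ are small, so the series is dominated by a single term. Relying on (A3) instead would prove a weaker theorem than the one stated.
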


Furthermore, in the case when no sparsity is assumed ($s=d$), we can derive a simpler statement for every segment $[-M,M]$ included in $[-\sqrt{\log(d)}, \sqrt{\log(d)}]$. To this end, we define the simplified estimator
\begin{equation}\label{def:estimateur2}
	\tilde{\bF} = \sum_{i=0}^d \hat{P}_{K,M}(y_i), \quad \hat{P}_{K,M}(u) = \sum_{k=0}^K a_{k,K,M} H_k(u),
\end{equation}
with $K=c\log(d)/\log(e\log(d)/M^2)$ for an arbitrary constant $c>0$. 

\begin{theorem}\label{theorem_upperbound_nonsparse}
	Assume that $0<M\le\sqrt{\log(d)}$, that for some constants $C'_1,C'_2>0$
	\begin{equation}
		\|F-F(0)\|_{\infty,[-M,M]} \le C'_1 d^{\e_1}, \quad \d_{K,M}^{-1} \le C'_2 d^{\e_2}, 
	\end{equation}
	and let $\tilde{\bF}$ be the estimator defined by~(\ref{def:estimateur2}). Then if $2\e_1+2\e_2<1$ and  if $c$ is chosen small enough, depending only on $\e_1+\e_2$,  then
	\begin{equation}
		\sup_{\bt \in \bT_{d,M}} \esp_{\bt} \big(\tilde{\bF}-\bF(\bt)\big)^2 \le C'_3 d^2\d_{K,M}^2,
	\end{equation}
	where $C'_3$ is some positive constant, depending only on $C_1'$ and $C_2'$. 
\end{theorem}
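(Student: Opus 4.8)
To prove Theorem~\ref{theorem_upperbound_nonsparse} I would follow the scheme behind Theorem~\ref{theorem_upperbound_dense}, which becomes considerably simpler here: with $s=d$ there is no sparsity to exploit, so the localisation through the indicators $\fcar_{t_{l-1}<|v|\le t_l}$ disappears and a single best--approximation polynomial does the job. Start from the bias--variance decomposition $\esp_\bt(\tilde\bF-\bF(\bt))^2=\big(\esp_\bt\tilde\bF-\bF(\bt)\big)^2+\var_\bt(\tilde\bF)$. For the bias, the property $\esp_{X\sim\calN(\t,1)}H_k(X)=\t^k$ gives $\esp_{\t_i}\hat P_{K,M}(y_i)=\sum_{k=0}^K a_{k,K,M}\t_i^k=P_{K,M}(\t_i)$, hence $\esp_\bt\tilde\bF-\bF(\bt)=\sum_{i=1}^d\big(P_{K,M}(\t_i)-F(\t_i)\big)$; since $\bt\in\bT_{d,M}$ forces $|\t_i|\le M$, each summand is at most $\d_{K,M}$ in absolute value, so the squared bias is at most $d^2\d_{K,M}^2$, which is already the announced rate.

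The variance is the real content. By independence of $y_1,\dots,y_d$, $\var_\bt(\tilde\bF)=\sum_{i=1}^d\var_{\t_i}\big(\hat P_{K,M}(y_i)\big)\le d\sup_{|\t|\le M}\var_\t\big(\hat P_{K,M}(Y)\big)$ with $Y\sim\calN(\t,1)$. To evaluate the one--dimensional variance I would write $Y=\t+\xi$, $\xi\sim\nzeroun$, and use the shift identity $H_k(\t+\xi)=\sum_{j=0}^k\binom{k}{j}\t^{k-j}H_j(\xi)$; summing over $k$ and collecting the coefficient of $H_j(\xi)$ gives $\hat P_{K,M}(Y)=\sum_{j=0}^K\frac{P_{K,M}^{(j)}(\t)}{j!}H_j(\xi)$, where $P_{K,M}^{(j)}$ denotes the $j$-th derivative of $P_{K,M}$. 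Since the $H_j(\xi)$ are orthogonal with $\esp H_j(\xi)^2=j!$ and the $j=0$ term is exactly the mean $P_{K,M}(\t)$, this yields the exact identity $\var_\t\big(\hat P_{K,M}(Y)\big)=\sum_{j=1}^K\frac{\big(P_{K,M}^{(j)}(\t)\big)^2}{j!}$, and everything reduces to bounding the derivatives of $P_{K,M}$ on $[-M,M]$.

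Derivatives of order $\ge1$ ignore constants, so I would replace $P_{K,M}$ by $P_{K,M}-F(0)$, whose sup--norm on $[-M,M]$ is at most $\d_{K,M}+\|F-F(0)\|_{\infty,[-M,M]}\le 2C'_1d^{\e_1}$ (using $\d_{K,M}\le\d_{0,M}\le\|F-F(0)\|_{\infty,[-M,M]}$). Inserting the Markov--Bernstein inequality for derivatives of polynomials, $\|Q^{(j)}\|_{\infty,[-M,M]}\le M^{-j}T_K^{(j)}(1)\,\|Q\|_{\infty,[-M,M]}$ for $Q\in\mathcal{P}_K$, with $T_K^{(j)}(1)\le K^{2j}/(2j-1)!!$, into this sum and simplifying the factorials via Stirling, the sum $\sum_{j=1}^K(\cdot)$ is dominated by its largest term, and one gets $\sup_{|\t|\le M}\var_\t\big(\hat P_{K,M}(Y)\big)\le (C'_1d^{\e_1})^2\,d^{c'}$. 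The inflation factor $d^{c'}$ comes from a factor exponential in $K$ --- of order $\big(CK/\min(1,M^2)\big)^{K}$, with the relevant exponent at most $O\!\big(K\log(e\log(d)/M^2)\big)$ --- and the calibration $K=c\log(d)/\log(e\log(d)/M^2)$ has been chosen precisely so that this equals $O(c)\cdot\log(d)$, i.e.\ $c'\le C''c$ for a universal constant $C''$, uniformly in $M\in(0,\sqrt{\log(d)}\,]$. Establishing this last estimate with a sharp enough constant is the only genuine obstacle; it is exactly the content of the technical lemmas of Section~\ref{section:lemma}, everything else being routine.

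Finally, combining the two bounds, $\esp_\bt(\tilde\bF-\bF(\bt))^2\le d^2\d_{K,M}^2+C\,d^{\,1+2\e_1+c'}$ with $C$ depending only on $C'_1$. Since $\d_{K,M}^{-1}\le C'_2d^{\e_2}$ gives $d^2\d_{K,M}^2\ge (C'_2)^{-2}d^{\,2-2\e_2}$, it is enough to pick $c$ --- hence $c'$ --- small enough that $1+2\e_1+c'\le 2-2\e_2$, which is possible because $2\e_1+2\e_2<1$ \emph{strictly} and $c$ may depend on $\e_1+\e_2$ only. Then the variance is at most a constant multiple of $d^2\d_{K,M}^2$, and absorbing the remaining polynomial--in--$K$ (hence $d^{o(1)}$) factors gives the claimed bound with $C'_3$ depending only on $C'_1$ and $C'_2$.
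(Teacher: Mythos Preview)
Your argument is correct, and the overall architecture---bias $\le d\,\d_{K,M}$, variance $\le d\cdot(\text{one--dimensional variance})$, then show the latter is at most $\|F\|_{\infty,[-M,M]}^2\,d^{c'}$ with $c'\to0$ as $c\to0$---is the same as the paper's. Where you diverge is in how you control the one--dimensional variance. The paper does \emph{not} use the Hermite shift identity or Markov--Bernstein; instead it writes $\tilde\bF=\sum_{k=0}^K a_{k,K,M}\,S_k$ with $S_k=\sum_i H_k(y_i)$, bounds $\var(\tilde\bF)\le\big(\sum_k |a_{k,K,M}|\sqrt{\var S_k}\big)^2$, and plugs in the coefficient bound $|a_{k,K,M}|\le A\|F\|_{\infty,[-M,M]}M^{-k}(1+\sqrt2)^K$ (Lemma~\ref{lemma_coefficients}) together with $\esp H_k^2(X)\le(k+\t^2)^k$ (Lemma~\ref{lemma_hermite2}). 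This gives the inflation factor $(1+\sqrt2)^{2K}(1+K/M^2)^K$, and the calibration of $K$ makes $K\log(1+K/M^2)\le Ac\log d$. Your exact identity $\var_\t(\hat P_{K,M}(Y))=\sum_{j\ge1}(P_{K,M}^{(j)}(\t))^2/j!$ is a cleaner starting point and makes the role of the smoothness of $P_{K,M}$ transparent; Markov--Bernstein then plays the same role that Lemma~\ref{lemma_coefficients} plays in the paper (both are Chebyshev--extremal polynomial inequalities). So your sentence ``it is exactly the content of the technical lemmas of Section~\ref{section:lemma}'' is not quite accurate: you are using a different, self--contained toolkit.

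Two small points of sloppiness that do not affect the conclusion. First, the intermediate expression $(CK/\min(1,M^2))^K$ is not the right shape: Markov--Bernstein gives a factor $M^{-j}$, so large $M$ only helps and there is no reason to cap at $1$; the quantity you actually need to bound is $\sum_{j=1}^K (K^2/M)^{2j}/\big(((2j-1)!!)^2 j!\big)$, whose logarithm is $O(K)+K\log_+(K/M^2)$, and your subsequent claim that the exponent is $O\big(K\log(e\log d/M^2)\big)=O(c\log d)$ is the correct endpoint. Second, in the final paragraph the leftover factor you call ``polynomial in $K$ (hence $d^{o(1)}$)'' does need $c$ small to be absorbed, since $K$ can be as large as $c\log d$; but you already allow $c$ to depend on $\e_1+\e_2$, so this is harmless.
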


\section{Optimality results}\label{section:optimality}

The next theorem, which is a slight modification of Theorem~4 in \cite{CollierCommingesTsybakov2019}, states a lower bound on the minimax rate.

\begin{theorem}\label{theorem_sparse_lowerbound_2}
	Assume that $0<M\le\sqrt{\log(s^2/d)}$ and that for some constants $C''_1 , C''_2>0$ 
	\begin{equation}
		\|F-F(0)\|_{\infty,[-M,M]} \le C''_1 \Big(\frac{s^2}{d}\Big)^{\e_1}, \quad \d_{e^2\log(s^2/d)/\log(e\log(s^2/d)/M^2),M}^{-1} \le C''_2 \Big(\frac{s^2}{d}\Big)^{\e_2}, 
	\end{equation}
	and that Assumption~(A0) holds. Then there exists an absolute positive constant $C\ge 1$ such that if $s^2\ge C d$,  if $2\e_1+2\e_2<1$ and if $c$ is chosen small enough, depending on $\e_1+\e_2$, we have
	\begin{equation}
		\inf_{\hat{\bF}} \sup_{\bt\in\bT_{s,M}} \esp_{\bt} \big( \hat{\bF}-\bF(\bt)\big)^2 \ge C''_3 s^2\d_{e^2\log(s^2/d)/\log(e\log(s^2/d)/M^2),M}^2,
	\end{equation}
	for some positive  constant $C''_3$, depending only on $C''_1 $ and $C''_2$. 
\end{theorem}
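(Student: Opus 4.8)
The plan is to use the standard two-point (or more precisely, two-prior) reduction for minimax lower bounds, following the Le Cam / moment-matching strategy that underlies Theorem~4 of \cite{CollierCommingesTsybakov2019}. The idea is to construct two prior distributions on $\bT_{s,M}$ whose $\chi^2$-divergence (after the Gaussian convolution) is bounded, while the expected values of $\bF$ under the two priors differ by an amount of order $s\,\d_{K^*,M}$, with $K^* = e^2\log(s^2/d)/\log(e\log(s^2/d)/M^2)$. Since $F$ is only continuous (Assumption~(A0)), one cannot work with $F$ directly; instead one first replaces $F$ by a bounded continuous function on $[-M,M]$ and exploits that the best polynomial approximation error $\d_{K^*,M}$ is, by Chebyshev's characterization, witnessed by an equioscillation, which (via duality / the Markov moment problem) yields two probability measures $\mu_0,\mu_1$ on $[-M,M]$ with matching moments up to order $K^*$ and with $\int F\,d\mu_1 - \int F\,d\mu_0 \asymp \d_{K^*,M}$.

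The key steps, in order, are as follows. First, build the single-coordinate priors: let $\nu_0 = (1-\tfrac{s}{2d})\delta_0 + \tfrac{s}{2d}\mu_0$ and $\nu_1 = (1-\tfrac{s}{2d})\delta_0 + \tfrac{s}{2d}\mu_1$, so that a draw from $\nu_j^{\otimes d}$ has, with high probability, at most $s$ nonzero coordinates (a Bernoulli/Poisson tail argument, or a conditioning on the sparsity event as in \cite{CollierCommingesTsybakov2019}, handles the hard $\ell_0$ constraint and contributes the constant $C$ in the hypothesis $s^2 \ge Cd$). Second, since $\mu_0,\mu_1$ have matching moments up to order $K^*$, the convolved single-coordinate densities $g_j = \nu_j \ast \varphi$ (with $\varphi$ the standard Gaussian density) satisfy $\int (g_1-g_0)^2/\varphi \lesssim (\tfrac{s}{d})^2 \cdot (\text{small})$, the smallness coming from the fact that only Hermite coefficients of order $>K^*$ survive and these decay because the moments of $\mu_j$ on $[-M,M]$ grow at most like $M^k$ while $k! $ in the denominator of the Hermite expansion kicks in once $k \gtrsim e M^2/\log(\cdots)$ — this is exactly where the particular choice of $K^*$ comes from, and where Assumption~(A1) bounding $\|F-F(0)\|_{\infty,[-M,M]}$ by $(s^2/d)^{\e_1}$ is used to control the overall magnitude. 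Third, tensorize: $\chi^2(\prob_{\nu_1}^{\otimes d}, \prob_{\nu_0}^{\otimes d}) = (1 + \int(g_1-g_0)^2/g_0)^d - 1$, and the condition $2\e_1 + 2\e_2 < 1$ with $c$ small ensures this is $O(1)$. Fourth, compute the separation: $\esp_{\nu_1^{\otimes d}}\bF - \esp_{\nu_0^{\otimes d}}\bF = d\cdot\tfrac{s}{2d}(\int F d\mu_1 - \int F d\mu_0) \asymp s\,\d_{K^*,M}$, and conclude via the standard fact that bounded $\chi^2$ plus this separation forces the squared minimax risk to be $\gtrsim s^2\d_{K^*,M}^2$; combine with the sparsity-event correction to land inside $\bT_{s,M}$.

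The main obstacle will be Step~2: establishing the $\chi^2$ bound between the convolved mixtures with the precise, non-asymptotic dependence on $K^*$, $M$, $s$, $d$. One must carefully track the Hermite expansion of $g_1 - g_0$, show that moment-matching up to $K^*$ kills the low-order terms, and then bound the tail sum $\sum_{k>K^*} \tfrac{(\int x^k \,d(\mu_1-\mu_0))^2}{k!}$ using $|\int x^k d(\mu_1-\mu_0)| \le 2M^k$ together with Stirling, obtaining a geometrically small bound precisely because $K^* \gg M^2$ up to the logarithmic factor. This is the computational heart of the argument and the place where the (A1)/(A2)-type growth conditions and the smallness of $c$ and $\e_1+\e_2$ are all consumed; the rest is bookkeeping around the sparsity event and invoking the abstract Le Cam lower bound. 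One may try to import most of this verbatim from the proof of Theorem~4 in \cite{CollierCommingesTsybakov2019}, since the only genuine change is replacing the specific $F(t)=|t|^\gamma$ by a general continuous $F$ whose relevant quantitative features are summarized by $\d_{K^*,M}$ and the bound in the hypothesis.
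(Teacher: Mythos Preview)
Your plan matches the paper's proof essentially step by step: moment-matching measures from the duality lemma (Lemma~\ref{lemma_mesure}), Bernoulli-$s/(2d)$ sparsification, $\chi^2$ tensorization with the Hermite tail bound $\sum_{k>K^*} M^{2k}/k!$, and restriction to $\bT_{s,M}$ by conditioning on the sparsity event.

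One point to straighten out: the hypotheses on $\|F-F(0)\|_\infty$ and $\d_{K^*,M}^{-1}$ (hence the condition $2\e_1+2\e_2<1$) are \emph{not} consumed in the $\chi^2$ step---that computation uses only moment matching and the size of $M$, and gives $\chi^2\le e^{2(d/s^2)^{e^2/2-1}}-1$ with no reference to $F$. They enter instead because $\bF(\bt)$ is random under each prior, so the naive Le Cam inequality (bounded $\chi^2$ plus separation $\Rightarrow$ lower bound) does not apply directly. The paper uses the fuzzy-hypothesis version, Theorem~2.15 of \cite{Tsybakov2009}, which in addition to $\mathrm{TV}(\bar\prob_0,\bar\prob_1)$ requires controlling the prior tail probabilities $\bar\mu_0(\bF\ge m_0+3v_0)$ and $\bar\mu_1(\bF\le \tfrac{m_0+m_1}{2}+3v_0)$. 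These are handled via Chebyshev--Cantelli once one checks $v_i^2\le d\|F\|_{\infty,[-M,M]}^2 \ll (s\d_{K^*,M})^2$, which is precisely where $2\e_1+2\e_2<1$ and $s^2\ge Cd$ are used. Relocating your use of the growth assumptions to this step, rather than to the $\chi^2$ bound, will make the argument go through cleanly.
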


But our estimation problem is more difficult than the problem where we know in advance that the nonzero coefficients belong to the first $k$ components of $\bt$ for $k\in\{s,\ldots,d\}$, and the last theorem gives lower bounds for these problems as well by replacing $d$ by $k$. This argument leads to the following corollary:

\begin{corollary}\label{corollary:lower}
	Let  Assumptions~(A0'-A1-A2) hold. Then there exist an  absolute positive constant $C\ge 1$ such that if $s^2\ge C d$,  if $2\e_1+2\e_2<1$ and if  $c$ is  chosen small enough, depending only on $\e_1+\e_2$,   we have

	\begin{equation}
		\inf_{\hat{\bF}} \sup_{\bt\in\bT_{s,\sqrt{\log(s)}}} \esp_{\bt} \big( \hat{\bF}-\bF(\bt)\big)^2 \ge C_4 s^2 \max_{s\le k\le d} \d_{e^2\log(s^2/k),\sqrt{\log(s^2/k)}}^2,
	\end{equation}
	for some positive constant $C_4$, depending only on $C_1 $, $C_2$.  \end{corollary}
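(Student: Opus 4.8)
The plan is to deduce Corollary~\ref{corollary:lower} from Theorem~\ref{theorem_sparse_lowerbound_2} by the standard ``reduction to a smaller problem'' argument sketched just before the statement: for each $k$ with $s\le k\le d$, embed the estimation problem on $\bT_{s,M}$ over the first $k$ coordinates into the full problem. Concretely, fix $k\in\{s,\dots,d\}$ and consider the subfamily of parameters $\bt\in\bT_{s,M}$ whose nonzero coordinates are supported in $\{1,\dots,k\}$; since $F(0)$ contributes a fixed additive constant from the remaining $d-k$ coordinates (which any estimator can subtract off), estimating $\bF(\bt)$ in the full model over this subfamily is exactly as hard as the $d=k$ version of the problem in Theorem~\ref{theorem_sparse_lowerbound_2}. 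Hence for every such $k$,
\begin{equation}
	\inf_{\hat{\bF}} \sup_{\bt\in\bT_{s,M}} \esp_{\bt}\big(\hat{\bF}-\bF(\bt)\big)^2 \ge \inf_{\hat{\bF}} \sup_{\bt\in\bT_{s,M},\ \mathrm{supp}(\bt)\subseteq\{1,\dots,k\}} \esp_{\bt}\big(\hat{\bF}-\bF(\bt)\big)^2,
\end{equation}
and the right-hand side is lower bounded by applying Theorem~\ref{theorem_sparse_lowerbound_2} with $d$ replaced by $k$.

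To invoke Theorem~\ref{theorem_sparse_lowerbound_2} with parameter $k$ in place of $d$, I need to check its hypotheses hold uniformly in $k\in\{s,\dots,d\}$ with the choice $M=\sqrt{\log(s^2/k)}\le\sqrt{\log(s)}$ (valid since $k\ge s$, and consistent with Assumption~(A0')). The condition $s^2\ge Ck$ is implied by $s^2\ge Cd\ge Ck$. The two growth bounds $\|F-F(0)\|_{\infty,[-M,M]}\le C_1''(s^2/k)^{\e_1}$ and $\d^{-1}_{e^2\log(s^2/k)/\log(e\log(s^2/k)/M^2),M}\le C_2''(s^2/k)^{\e_2}$ must follow from Assumptions~(A1) and~(A2): with $M=\sqrt{\log(s^2/k)}$ one has $s^2/k = e^{M^2}$, so $(s^2/k)^{\e_i}=e^{\e_i M^2}$, and since $k$ ranges in $[s,d]$ and $s^2\ge Cd$, the value $M$ ranges within $[\text{const},\sqrt{2\log d}]$, putting us inside the supremum range of (A1)–(A2) up to harmless constant adjustments in $\e_1,\e_2,C_1,C_2$ (here I also use $\log(e\log(s^2/k)/M^2)=\log(e)=1$, simplifying the exponent $e^2\log(s^2/k)/\log(e\log(s^2/k)/M^2)$ to $e^2\log(s^2/k)$, matching the form in the corollary's conclusion). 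Taking the maximum over $k$ of the resulting lower bounds, and absorbing the $k$-dependent constants $C_3''(k)$ into a single constant $C_4$ via Assumption~(A3) (which guarantees the $\d$-quantities for nearby parameter values stay within a fixed multiplicative factor, so the constants do not degenerate), yields the claimed bound.

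The main obstacle is the bookkeeping around constants: Theorem~\ref{theorem_sparse_lowerbound_2} produces a constant $C_3''$ depending on $C_1'',C_2''$, and a priori the ``small enough $c$'' and the verification of $2\e_1+2\e_2<1$ could both depend on $k$. I would argue that because (A1)–(A2) give a single pair $(\e_1,\e_2)$ and single constants $(C_1,C_2)$ valid for the whole range of $M$, the exponents and constants feeding into Theorem~\ref{theorem_sparse_lowerbound_2} can be chosen uniformly in $k$; this is exactly the role of (A1)–(A2) being stated as suprema over an $M$-interval rather than pointwise. Assumption~(A3) then handles the one remaining subtlety — that the $\d_{K,M}$ appearing in Theorem~\ref{theorem_sparse_lowerbound_2} for parameter $k$ is the ``$e^2\log(s^2/k)/\log(e\log(s^2/k)/M^2)$'' version, which after the simplification above coincides with the $\d_{e^2\log(s^2/k),\sqrt{\log(s^2/k)}}$ in the corollary — so that the max over $k$ is literally the quantity claimed. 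Everything else is the routine embedding argument.
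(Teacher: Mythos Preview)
Your reduction argument is exactly the one the paper sketches in the sentence preceding the corollary: for each $k\in\{s,\dots,d\}$, restrict to parameters supported in the first $k$ coordinates, apply Theorem~\ref{theorem_sparse_lowerbound_2} with $d$ replaced by $k$ and $M=\sqrt{\log(s^2/k)}$, then maximize over $k$. Your observation that this choice of $M$ collapses the degree $e^2\log(s^2/k)/\log\big(e\log(s^2/k)/M^2\big)$ to $e^2\log(s^2/k)$ is precisely what makes the conclusion of the theorem match the form in the corollary, and the embedding inequality you write down is the standard one.

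One point to correct: you invoke Assumption~(A3), but the corollary is stated under (A0')--(A1)--(A2) only, so you are not entitled to use (A3) here. Fortunately it is not needed. The whole reason (A1) and (A2) are phrased as suprema over an interval of $M$-values is that they deliver a \emph{single} pair of constants $(C_1,C_2)$ and exponents $(\e_1,\e_2)$ that serve, uniformly over $k\in\{s,\dots,d\}$, as the $(C_1'',C_2'')$ and $(\e_1,\e_2)$ required in the hypotheses of Theorem~\ref{theorem_sparse_lowerbound_2} (after the harmless rescaling of exponents you already note). The constant $C_3''$ produced by that theorem depends only on these uniform inputs, not on $k$, so there are no ``$k$-dependent constants $C_3''(k)$'' to absorb. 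Your second appeal to (A3) you yourself recognize as moot, since after your simplification the $\d$-quantity from the theorem is literally the one in the corollary. Drop both references to (A3) and the argument is exactly the paper's.
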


Furthermore, the next theorem states that Assumption~(A3) is sufficient to prove that the upper bound from Theorem~\ref{theorem_upperbound_dense} matches with the lower bound from Corollary~\ref{corollary:lower}.

\begin{theorem}\label{theorem:minimax}
	 Let Assumptions~(A0'-A1-A2-A3) hold. Then there exist an  absolute positive constant $C\ge \sqrt{2}$ such that if $s^2\ge C d$,  if $\e_1,\e_2$  are small enough and $c$ is chosen small enough, depending only on $\e_1$ and $\e_2$, we have
	\begin{equation}
		C_5 \le \frac{\inf_{\hat{\bF}} \sup_{\bt\in\bT_{s,\sqrt{\log(d)}}} \esp_{\bt} \big( \hat{\bF}-\bF(\bt)\big)^2}{s^2 \max_{s\le k\le d} \d_{\log(s^2/k),\sqrt{\log(s^2/k)}}^2} \le C'_5,
	\end{equation}
	for some positive  constants $C_5,C'_5$, depending only on $C_1, C_2$ and $c$. 
\end{theorem}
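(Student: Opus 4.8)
\emph{Overall strategy.} The plan is to sandwich the minimax risk between the upper bound of Theorem~\ref{theorem_upperbound_dense} and the lower bound of Corollary~\ref{corollary:lower}, and then to reconcile the three \emph{a priori} distinct polynomial-approximation quantities occurring there --- $\max_{0\le l\le L+1}\d_{K_l,M_l}^2$, $\max_{s\le k\le d}\d_{e^2\log(s^2/k),\sqrt{\log(s^2/k)}}^2$, and the ``canonical'' $\max_{s\le k\le d}\d_{\log(s^2/k),\sqrt{\log(s^2/k)}}^2$ --- using Assumption~(A3) only. We fix the absolute constant $C\ge 4$ (hence $C\ge\sqrt2$) large enough that $s^2\ge Cd$ forces both $2\sqrt d\le s\le d$, so that Theorem~\ref{theorem_upperbound_dense} is applicable, and $\tfrac12\log d\le\log s\le\log d$; this last equivalence $\log s\asymp\log d$, which holds precisely because $Cd\le s^2\le d^2$, is the structural fact that makes the two discretizations compatible. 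We choose $\e_1,\e_2$ small enough that $\e_1+\e_2$ is admissible in Theorem~\ref{theorem_upperbound_dense} and $\e_1+\e_2<\tfrac12$ as required by Corollary~\ref{corollary:lower}, and then $c$ small enough for both. Finally note that $\bT_{s,\sqrt{\log s}}\subseteq\bT_{s,\sqrt{\log d}}\subseteq\bT_{s,\sqrt{2\log d}}$.

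\emph{Lower bound.} Shrinking the parameter set, Corollary~\ref{corollary:lower} yields $\inf_{\hat\bF}\sup_{\bt\in\bT_{s,\sqrt{\log d}}}\esp_\bt(\hat\bF-\bF(\bt))^2\ge C_4\,s^2\max_{s\le k\le d}\d_{e^2\log(s^2/k),\sqrt{\log(s^2/k)}}^2$. For every $k$, the pairs $(e^2\log(s^2/k),\sqrt{\log(s^2/k)})$ and $(\log(s^2/k),\sqrt{\log(s^2/k)})$ have the same endpoint and degrees in ratio $e^2$, so Assumption~(A3) applied with $\a=e^2-1$ gives $\d_{e^2\log(s^2/k),\sqrt{\log(s^2/k)}}\ge f_\a^{-1}\d_{\log(s^2/k),\sqrt{\log(s^2/k)}}$; maximizing over $k$ proves the left inequality with $C_5=C_4 f_\a^{-2}$.

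\emph{Upper bound.} By Theorem~\ref{theorem_upperbound_dense}, $\inf_{\hat\bF}\sup_{\bt\in\bT_{s,\sqrt{\log d}}}\esp_\bt(\hat\bF-\bF(\bt))^2\le C_3\,s^2\max_{0\le l\le L+1}\d_{K_l,M_l}^2$ with $K_l=\tfrac c8 M_l^2$, so it suffices to bound each $\d_{K_l,M_l}$ by $C(c)\max_{s\le k\le d}\d_{\log(s^2/k),\sqrt{\log(s^2/k)}}$ with $C(c)$ depending only on $c$. Set $N_l=\min\{4^l\log(s^2/d),\log s\}$ for $0\le l\le L$ and $N_{L+1}=\log s$; since $\log(s^2/d)\le N_l\le\log s$, we may pick an integer $k_l\in[s,d]$ with $\log(s^2/k_l)$ equal to $N_l$ up to a factor $1+o(1)$ (rounding $k$ changes $\log(s^2/k)$ by $O(1/s)$). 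From $4^L<\log(d)/\log(s^2/d)$, $M_{L+1}^2=2\log d$ and $\log d\le 2\log s$, a short computation gives $M_l^2/N_l\in[2,4]$, hence $K_l/N_l=\tfrac c8\,M_l^2/N_l\in[c/4,c/2]$, for all $l$. Thus $(K_l,M_l)$ differs from $(\log(s^2/k_l),\sqrt{\log(s^2/k_l)})$ only by a factor in $[\sqrt2,2]$ in the endpoint and a factor of order $1/c$ in the degree; interpolating by a chain of $O(1+\log(1/c))$ pairs whose consecutive coordinate ratios stay within an absolute $\a_0$, and applying Assumption~(A3) along the chain, yields $\d_{K_l,M_l}\le f_{\a_0}^{\,O(1+\log(1/c))}\d_{\log(s^2/k_l),\sqrt{\log(s^2/k_l)}}$. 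Taking the maximum over $l$ and setting $C'_5=C_3 C(c)^2$ proves the right inequality.

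\emph{Main obstacle.} The delicate step is the upper-bound comparison: one has to verify that the dyadic values $M_l$, with $M_l^2\in\{4^l\cdot 2\log(s^2/d)\}\cup\{2\log d\}$, do sweep the interval $[\log(s^2/d),\log d]$ of possible values of $\log(s^2/k)$ with only bounded multiplicative gaps --- this is exactly where $Cd\le s^2\le d^2$ (equivalently $\log s\asymp\log d$) enters --- and that the discrepancy of order $1/c$ between the estimator's degree $K_l=\tfrac c8 M_l^2$ and the canonical degree $\log(s^2/k_l)\asymp M_l^2$ can be absorbed by iterating Assumption~(A3) a number of times that depends only on $c$. This dependence on $c$ is precisely why the constants $C_5,C'_5$ in the statement are allowed to depend on $c$.
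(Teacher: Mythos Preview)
Your proof is correct and follows essentially the same route as the paper's: sandwich the risk between Theorem~\ref{theorem_upperbound_dense} and Corollary~\ref{corollary:lower}, then use Assumption~(A3) to reconcile the three approximation-rate scales. The only bookkeeping difference is that the paper matches the degrees $K_l$ (rather than the endpoints $M_l$) to $\log(s^2/k_l)$ for $l$ above a threshold $l_0\asymp\log(1/c)$ and handles the finitely many small-$l$ terms separately, applying (A3) once with a single $c$-dependent $\a$ instead of your chain of $O(\log(1/c))$ steps; the chain is unnecessary but harmless.
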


This means in particular that for non-regular functionals satisfying the conditions (A0'-A1-A2-A3), the rate appearing in Theorem~\ref{theorem:minimax} must be the same as the rate found in~\cite{PolyanskiyWu2019}. More precisely, let us denote $\d_{\chi^2}(\frac{1}{\sqrt{d}})$ the following quantity 
\begin{align}
	\sup_{\pi_1,\pi_2} \Big\{ \Big| \int F(\t)\,\pi_1(d\t) - \int F(\t)\,\pi_2(d\t) \Big| \suchthat \chi^2(\prob_{\pi_1},\prob_{\pi_2})\le \frac1d, \esp_{\pi_i} \|\bt\|_0 \le s\Big\}
\end{align}
where the supremum is taken over all distribution probabilities on $[-M,M]^d$. According to Theorem 8 in~\cite{PolyanskiyWu2019}, if $\delta_{\chi^2}(\frac{1}{\sqrt{d}})\ge 31\frac{\|F\|_{\infty,[-M,M]}}{\sqrt{d}}$, then $\delta_{\chi^2}(\frac{1}{\sqrt{d}})$ is the minimax rate for estimating $\sum_{i=1}^d F(\t_i)$ over $\bT_{s,M}$, hence it is of the order of
\begin{equation}
	s^2 \max_{s\le k\le d} \d_{\log(s^2/k),\sqrt{\log(s^2/k)}}^2.
\end{equation}

Moreover, similar results as in~\cite{CaiLow2011,CollierCommingesTsybakov2019} (with bounded parameter space) can be easily deduced since for the function $x\to |x|^\g$, the approximation rate $\d_{K,M}$ is of the order of $(M/K)^\g$ (\cf~ for example Theorem~7.2.2 in~\cite{Timan1963}).

Finally, Assumption~(A2) is strongly related to the differentiability of the marginal functional $F$. Indeed, the following properties can be found in~\cite{Timan1963}, Sections~5.1.5 and~6.2.4: 
\begin{itemize}
	\item If $F$ has a bounded derivative of order $r$ on $[-1,1]$, then 
	\begin{equation}
		\forall n\ge1, \quad \d_{n,[-1,1]} \le \frac{C}{n^r},
	\end{equation}
	for some positive contant $C$.
	\item $F$ is infinitely derivable on $[a,b]$ if and only if for any $r>0$, 
	\begin{equation}
		n^r \d_{n,[a,b]} \to 0.
	\end{equation}
\end{itemize}
This suggests that many not infinitely differentiable functionals satisfy Assumption~(A2).

\section{Proof of theorems}\label{section:proof:theorem}

In the whole section, we denote by $A$ a positive constant  the value of which may vary from line to line. This constant only depends on $C_1$ and $C_2$ (Theorem~\ref{theorem_upperbound_dense}) and Theorem~\ref{theorem:minimax}), $C_1'$ and $C_2'$ (Theorem~\ref{theorem_upperbound_nonsparse}), $C_1^{''}$ and $C_2^{''}$  (Theorem~\ref{theorem_sparse_lowerbound_2}). Moreover, since 
\begin{equation}
	\esp_{\bt} \big( \hat{\bF}-\bF(\bt)\big)^2 = \esp_{\bt} \Big[ \big(\hat{\bF}-dF(0)\big)-\big(\bF(\bt)-dF(0)\big)\Big]^2,
\end{equation}
we can  assume without loss of generality that $F(0)=0$, which we do throughout this section.

\subsection{Proof of Theorem~\ref{theorem_upperbound_dense}}

Denote by $S$ the support of $\bt$. We start with a bias-variance decomposition
\begin{align}
	\big(\hat{\bF}-\bF(\bt)\big)^2 &\le 4\,\Big(\sum_{i\in S} \esp_\bt\hat{F}(y_{1,i},y_{2,i})-\sum_{i\in S} F(\t_i)\Big)^2 \\
	&+ 4\,\Big(\sum_{i\in S} \hat{F}(y_{1,i},y_{2,i})-\sum_{i\in S} \esp_\bt\hat{F}(y_{1,i},y_{2,i})\Big)^2 \\
	&+ 4\,\Big(\sum_{i\not\in S} \esp_\bt \hat{F}(y_{1,i},y_{2,i}) \Big)^2 \\
	&+ 4\,\Big(\sum_{i\not\in S} \hat{F}(y_{1,i},y_{2,i})-\sum_{i\not\in S} \esp_\bt\hat{F}(y_{1,i},y_{2,i})\Big)^2
\end{align}
leading to the bound
\begin{align}\label{proof_eq1}
	\esp_\bt(\hat{\bF}-\bF(\bt))^2 &\le 4s^2 \max_{i\in S} B_i^2 + 4s \max_{i\in S} V_i \\
	&+ 4d^2 \max_{i\not\in S} B_i^2 + 4d \max_{i\not\in S} V_i, \nonumber
\end{align}
where $B_i=\esp_\bt \hat{F}(y_{1,i},y_{2,i})- F(\t_i)$ is the bias of $\hat{F}(y_{1,i},y_{2,i})$ as an estimator of $F(\t_i)$ and $V_i=\var_\bt(\hat{F}(y_{1,i},y_{2,i}))$ is its variance. We now bound separately the four terms in~\eqref{proof_eq1}.  \\

$1^{\circ}.$ {\it Bias for $i\not\in S$.} If $i\not\in S$, then $B_i = 0$.

$2^{\circ}.$ {\it Variance for $i\not\in S$.} If $i\not\in S$, then using in particular Lemma~\ref{lemma_CaiLow1},
\begin{align}\label{proof_eq1a}
	V_i &\le \sum_{l=0}^{L+1} \esp \hat{P}^2_{K_l,M_l}(\xi) \, \prob(t_{l-1}<|\xi|), \quad \xi\sim\nzeroun, \\
	&\le A \sum_{l=0}^{L+1} \|F\|_{\infty,[-M_l,M_l]}^2 6^{K_l} e^{-t_{l-1}^2/2}.
\end{align}
For $l=0$, we have by Assumptions~(A1-A2), if $\e_1+\e_2<1/4$ and for $c$ small enough
\begin{equation}
	\|F\|_{\infty,[-M_0,M_0]}^2 6^{K_0} e^{-t_{-1}^2/2} \d^{-2}_{K_0,M_0} \le A \Big(\frac{s^2}{d}\Big)^{4\e_1 + 4\e_2 + c\log(6)/4} \le A \frac{s^2}{d}. 
\end{equation}
Then, if $l>0$,
\begin{equation}
	\|F\|_{\infty,[-M_l,M_l]}^2\, 6^{K_l} e^{-t_{l-1}^2/2} \d^{-2}_{K_l,M_l} \le A \Big(\frac{s^2}{d}\Big)^{4^l(4\e_1+4\e_2 + c\log(6)/4 - \frac1{16})}
\end{equation}
so that for small enough $c,\e_1$ and $\e_2$ and since $s^2\ge 4d$,
\begin{equation}
	dV_i \le A s^2\max_{l=0,\ldots,L} \d^2_{K_l,M_l}.
\end{equation}


\medskip

$3^{\circ}.$ {\it Bias for $i\in S$.} If $i\in S$, the bias has the form
\begin{align}
	B_i &= \sum_{l=0}^{L} \big\{ \esp \hat{P}_{K_l,M_l}(\xi) - F(\t_i) \big\}\,\prob(t_{l-1}<|\xi|\le t_l) \\ &+ \big\{ \esp \hat{P}_{K_{L+1},M_{L+1}}(\xi) - F(\t_i) \big\}\,\prob(t_{L}<|\xi|), \quad \xi\sim\calN(\t_i,1).
\end{align}
We will analyze this expression separately in different ranges of $|\t_i|$.

\smallskip

$3.1^{\circ}.$ {\it Case $0<|\t_i|\le2 t_0$. } In this case, we use the bound
\begin{align}
	|B_i| &\le \max_l \big|\esp \hat{P}_{K_l,M_l}(\xi)-F(\t_i)\big|, \quad \xi\sim\calN(\t_i,1).
\end{align}
Since $|\t_i|\le M_l$ for all~$l$, we have by the definition of $P_{K_l,M_l}$ and since $F(0)=0$,
\begin{align}\label{proof_eq3a}
	\big|\esp \hat{P}_{K_l,M_l}(\xi)-F(\t_i)\big| &\le \big| P_{K_l,M_l}(\t_i) - a_{0,K_l,M_l} - F(\t_i)\big| \\ &\le \big| P_{K_l,M_l}(\t_i) - F(\t_i)\big| + |F(0) - P_{K_l,M_l}(0)| \\ &\le 2\d_{K_l,M_l}, \phantom{\big|}
\end{align}
so that
\begin{equation}\label{proof_eq4}
	s^2 \max_{0<|\t_i|\le 2 t_0} B^2_i \le 4s^2\max_{l=0,\ldots,L+1} \d^2_{K_l,M_l}.
\end{equation}
\smallskip

$3.2^{\circ}.$ {\it Case $2 t_0<|\t_i|\le 2t_L$. }	
 Let $l_0\in\{0,\ldots,L-1\}$ be the integer such that $2t_{l_0} < |\t_i| \le 2t_{l_0+1}$.  We have
\begin{align}\label{proof_eq4a}
	|B_i| &\le \sum_{l=0}^{l_0} \big|\esp \hat{P}_{K_l,M_l}(\xi)-F(\t_i)\big|\cdot\prob(t_{l-1}<|\xi|\le t_l)  \\
	&+ \max_{l> l_0} \big|\esp \hat{P}_{K_l,M_l}(\xi)-F(\t_i)\big|, \quad \xi\sim\calN(\t_i,1)\nonumber
 \end{align}
The arguments in~\eqref{proof_eq3a} yield that 
\begin{equation}
	\max_{l> l_0}\big|\esp \hat{P}_{K_l,M_l}(\xi)-F(\t_i)\big| \le 4 \max_{l=0,\ldots,L+1} \d^2_{K_l,M_l}.
\end{equation}
Furthermore, using the triangular inequality,
\begin{align}
	 &\sum_{l=0}^{l_0} \big|\esp \hat{P}_{K_l,M_l}(\xi)-F(\t_i)\big|\cdot\prob(t_{l-1}<|\xi|\le t_l) \\ \le &\sum_{l=0}^{l_0} \big|\esp \hat{P}_{K_l,M_l}(\xi)\big|\cdot\prob(|\xi|\le t_l) + \sum_{l=0}^{l_0} \big|F(\t_i)\big|\cdot\prob(t_{l-1}<|\xi|\le t_l).
\end{align}
The first sum in the right-hand side can be bounded using Lemma~\ref{lemma_CaiLowamlior}, since
\begin{align}
	\big|\esp \hat{P}_{K_l,M_l}(\xi)\big|\,\prob(|\xi|\le t_l) \le A \|F\|_{\infty,[-M_l,M_l]}\, 3^{K_l} e^{c\t_i^2/16}\, \prob(|\xi|\le t_l), 
\end{align}		
so that, as $|\t_i|>2t_{l_0}\ge 2t_l$ for $l\le l_0$, 
\begin{align}
	\big|\esp \hat{P}_{K_l,M_l}(\xi)\big|\,\prob(|\xi|\le t_l) \d^{-1}_{K_l,M_l} &\le A 3^{K_l} e^{(c-2)\t_i^2/16} e^{(\e_1+\e_2)M_l^2} \\
	&\le A e^{(8\e_1+8\e_2+c\log(3)+(c-2)/2)t_l^2/2} \\ 
	&= A\Big(\frac{s^2}{d}\Big)^{2^{2l-2}(8\e_1+8\e_2+c\log(3)+(c-2)/2)}
\end{align}
Again, if $\e_1+\e_2<\frac{1}{8}$, choosing $c$ small enough yields that 
\begin{equation}
	\sum_{l=0}^{l_0} \big|\esp \hat{P}_{K_l,M_l}(\xi)\big|\,\prob(|\xi|\le t_l) \le A \max_{l=0,\ldots,L} \d_{K_l,M_l}.
\end{equation}
Moreover, similar arguments lead to the fact that if $4\e_1+\e_2 < \frac1{8}$
\begin{equation}
	\sum_{l=0}^{l_0} |F(\t_i)| \prob(t_{l-1}<|\xi|\le t_l) \le \|F\|_{\infty,[-M_{l_0+1},M_{l_0+1}]} \prob(|\xi|\le t_{l_0}) \le A \d_{K_{l_0},M_{l_0}},	
\end{equation}		
and we conclude that 
\begin{equation}\label{proof_eq5}
	s^2 \max_{2 t_0<|\t_i|\le2 t_L} B^2_i \le A s^2 \max_{l=0,\ldots,L+1} \d_{K_l,M_l}^2.
\end{equation}

$3.3^{\circ}.$ {\it Case $2t_L < |\t_i| \le \sqrt{2\log(d)}$. } Similar arguments as in the previous case yield that
\begin{equation}\label{proof_eq6}
	s^2 \max_{2t_L < |\t_i| \le \sqrt{2\log(d)}} B^2_i \le A s^2 \max_{l=0,\ldots,L+1} \d_{K_l,M_l}^2.
\end{equation}

\vspace{3mm}
\bigskip

$4^{\circ}.$ {\it Variance for $i\in S$.} We consider the same cases as in item $3^{\circ}$ above. In all cases, it suffices to bound the variance by the second-order moment, which grants that, for all $i\in S$, 
\begin{align} \label{rough}
	V_i \le \sum_{l=0}^{L} \esp \hat{P}^2_{K_l,M_l}(\xi) \,\prob(t_{l-1}<|\xi|\le t_l) + \esp \hat{P}^2_{K_{L+1},M_{L+1}}(\xi) \,\prob(t_{L}<|\xi|), \quad \xi\sim\calN(\t_i,1).
\end{align}

\medskip

$4.1^{\circ}.$ {\it Case $0<|\t_i|\le2 t_0$.}
In this case, we deduce from \eqref{rough} that
	\begin{equation}
	V_i \le \max_{l=0,\ldots,L+1} \esp \hat{P}^2_{K_l,M_l}(\xi), \quad \xi\sim\calN(\t_i,1).
\end{equation}
Lemma~\ref{lemma_CaiLow2} implies
\begin{align}
	V_i \le A \|F\|_{\infty,[-M_{L+1},M_{L+1}]}^2 12^{K_{L+1}} \le A d^{4\e_1+c\log(12)/4},
\end{align}
which, as $\sqrt{d}\le s$, is sufficient to conclude that
\begin{equation}\label{proof_eq7}
	s \max_{0<|\t_i|\le 2 t_0} V_i \le A s^2 \max_{l=0,\ldots,L+1} \d_{K_l,M_l}^2,
\end{equation}	
for $c, \e_1, \e_2$ small enough.
\smallskip

$4.2^{\circ}.$ {\it Case $2 t_0<|\t_i|\le2 t_L$.}	
 As in item $3.2^{\circ}$ above, we denote by $l_0\in\{0,\ldots,L-1\}$ the integer such that $2t_{l_0} < |\t_i| \le 2t_{l_0+1}$. We deduce from \eqref{rough} that
\begin{align}
	V_i &\le (l_0+1)\max_{l=0,\ldots,l_0} \esp \hat{P}^2_{K_l,M_l}(\xi) \, \prob(|\xi|\le t_{l_0}) + \max_{l=l_0+1,\ldots,L+1} \esp \hat{P}^2_{K_l,M_l}(\xi), \quad \xi\sim\calN(\t_i,1).
\end{align}
The last term on the right hand side is controlled as in item~$4.1^{\circ}$. For the first term, we find using Lemma~\ref{lemma_CaiLowamlior} that, for $\xi\sim\calN(\t_i,1)$,
\begin{align}\label{gg}
	\max_{l=0,\ldots,l_0} \esp \hat{P}^2_{K_l,M_l}(\xi) \, \prob(|\xi|\le t_{l_0}) &\le A \|F\|_{\infty,[-M_{l_0},M_{l_0}]}^2\, 6^{K_{l_0}} e^{\frac{c\log(1+8/c)}{8}\t_i^2}\, e^{-\t_i^2/8} \\
	&\le A  e^{(8\e_1+\frac{c\log 6}2+\frac{c\log(1+8/c)}2-\frac12)t^2_{l_0}}. 
\end{align}
Choosing $c,\e_1,\e_2$ small enough allows us to obtain the desired bound
\begin{equation}\label{proof_eq8}
	s \max_{2 t_0<|\t_i|\le2 t_L} V_i \le A s^2 \max_{l=0,\ldots,L+1} \d_{K_l,M_l}^2.
\end{equation}	

$4.3^{\circ}.$ {\it Case $2t_L < |\t_i| \le \sqrt{2\log(d)}$. } Similar arguments as in the previous case yield that
	\begin{equation}\label{proof_eq8}
	s \max_{2t_L < |\t_i| \le \sqrt{2\log(d)}} V_i \le A s^2 \max_{l=0,\ldots,L+1} \d_{K_l,M_l}^2.
\end{equation}	

\medskip	

The result of the theorem follows.

\subsection{Proof of Theorem~\ref{theorem_upperbound_nonsparse}}

By construction, we have
\begin{equation}
	\esp_{\bt} \big(\hat{\bF}-\bF(\bt)\big)^2 \le d^2\d_{K,M}^2 + \var\big(\hat{\bF}\big).
\end{equation}
To bound the variance, we write 
\begin{equation}
	\hat{\bF} = \sum_{k=0}^K a_{k,K,M} S_k, \quad S_k = \sum_{i=1}^d H_k(y_i),
\end{equation}
so that
\begin{equation}
	\var(\hat{\bF}) \le \Big( \sum_{k=0}^K |a_{k,K,M}| \sqrt{\var(S_k)} \Big)^2,
\end{equation}	 
since for any random variables $X_1,\ldots, X_n$, we have
\begin{equation}
	\esp\Big(\sum_{i=1}^n X_i\Big)^2 \le \Big( \sum_{i=1}^n \sqrt{\esp(X_i^2)} \Big)^2.
\end{equation}	
Furthermore, by Lemmas~\ref{lemma_coefficients} and~\ref{lemma_hermite2},
\begin{align}
	\sum_{k=0}^K |a_{k,K,M}| \sqrt{\var(S_k)} &\le A \sqrt{d} \|F\|_{\infty,[-M,M]}\, K(1+\sqrt2)^K \Big(1+\frac{K}{M^2}\Big)^{K/2}.
\end{align}		
Using the definition of $K$, we have
\begin{align}
	K\log(1+K/M^2) \le A c\log(d),
\end{align}
hence, taking $c$ small enough implies that 
\begin{align}
	\var(\hat{\bF}) \d^{-2}_{K,M} &\le A K^2(1+\sqrt2)^{2K} d^{2\e_1+2\e_2+1+Ac} \le A d^2.
\end{align}
The result follows.

\subsection{Proof of Theorem~\ref{theorem_sparse_lowerbound_2}}

\textbf{Preliminary:} By Markov's inequality, we have for every $K>0$
\begin{equation}\label{theorem_sparse_lowerbound_2:1}
	\inf_{\hat{\bF}} \sup_{\bt\in\bT} \esp_{\bt} \big( \hat{\bF}-\bF(\bt)\big)^2 \ge \frac{s^2\d_{K,M}^2}{4} \inf_{\hat{\bF}} \sup_{\bt\in\bT} \prob_{\bt} \Big( |\hat{\bF}-\bF(\bt)| \ge s\d_{K,M}/2 \Big),
\end{equation}
and Theorem~2.15 in~\cite{Tsybakov2009} implies that for any prior measures $\bar{\mu}_0$ and $\bar{\mu}_1$ concentrated on~$\bT$ 
\begin{equation}\label{theorem_sparse_lowerbound_2:2}	
	\inf_{\hat{\bF}} \sup_{\bt\in\bT} \prob_{\bt} \Big( |\hat{\bF}-\bF(\bt)| \ge \frac{m_1-m_0}{4} \Big) \ge \frac{1-V}{2}
\end{equation}	
with
\begin{equation}\label{egaliteV}
 	V = \mathrm{TV}(\bar{\prob}_0,\bar{\prob}_1) + \bar{\mu}_0\big(\bF(\bt)\ge m_0+3v_0 \big) + \bar{\mu}_1\big( \bF(\bt)\le \frac{m_0+m_1}{2}+3v_0 \big),
\end{equation}
where $\mathrm{TV}$ denotes the total-variation distance, and for $i=0,1$, $\bar{\prob}_i$ is defined for every measurable set by
\begin{equation}
	\bar{\prob}_i(A) = \int_{\RR^d} \prob_{\bt} (A)\,\bar{\mu}_i(d\bt)
\end{equation}
and $m_0,m_1,v_0$ are to be chosen later. 

\textbf{Construction of the prior measures:} First we choose
\begin{equation}
	K = \frac{e^2\log(s^2/d)}{\log(e\log(s^2/d)/M^2)},
\end{equation}
and we denote $\mu_i$ for $i\in\{0,1\}$  the distribution of the random vector $\bt\in\RR^d$ with independent components distributed as $\e\eta_i$, where $\e$ and $\eta_i$ are independent, $\e\sim\mathcal{B}\big(s/(2d)\big)$ and $\eta_i$ is distributed according to $\nu_i$ defined in~Lemma~\ref{lemma_mesure}. Then, we define probabilities $\prob_0$ and $\prob_1$ by
\begin{equation}
	\prob_i(A) = \int_{\RR^d} \prob_{\bt}(A)\,\mu_i(d\bt),
\end{equation}
for all measurable sets $A$. The densities of these probabilities with respect to the Lebesgue measure on $\RR^d$ are given by
\begin{equation}
	f_i(x) = \prod_{i=1}^d g_i(x_i), 
\end{equation}
where 
\begin{equation}
	g_i(x) = \frac{s}{2d} \phi_i(x) + \Big(1-\frac{s}{2d}\Big) \phi(x), 
\end{equation}
and
\begin{equation}
	\phi_i(x) = \int_{\RR} \phi(x-t)\,\nu_i(dt), \quad \phi(x) = \frac1{\sqrt{2\pi}} e^{-x^2/2}.
\end{equation}
But as the $\mu_i$'s are not supported on $\bT$, we define counterparts $\bar{\mu}_i$'s by
\begin{equation}
	\bar{\mu}_i(A) = \frac{\mu_i(A\cap \bT)}{\mu_i(\bT)}.
\end{equation}
Finally, we denote
\begin{equation}
	m_i = \int_{\RR^d} \bF(\bt)\,\mu_i(d\bt), \quad v_i^2 = \int_{\RR^d} (\bF(\bt)-m_i)^2\,\mu_i(d\bt).
\end{equation}
	
\textbf{Bounding the probabilities in~(\ref{egaliteV}):}
According to Lemma~\ref{lemma_mesure}, we have
\begin{equation}\label{m1-m0}
	m_1 - m_0 = d \times \frac{s}{2d} \times \Big( \int_{-M}^M F(t)\,\nu_1(dt) - \int_{-M}^M F(t)\,\nu_0(dt) \Big) = s\d_{K,M}.	
\end{equation}
Using Lemma~9 in~\cite{CollierCommingesTsybakov2019} and Chebyshev-Cantelli's inequality, we have for $d$ large enough
\begin{align}\label{theorem_sparse_lowerbound_2:3}
	\bar{\mu}_0\big(\bF(\bt)\ge m_0+3v_0 \big) &\le \mu_0\big(\bF(\bt)\ge m_0+3v_0 \big) + e^{-s/16}\\
	&\le \frac{v_0^2}{v_0^2+(3v_0)^2} + e^{-s/16} < \frac15.
\end{align}
Now, we notice that for $i\in\{0,1\}$, we have
\begin{equation}
	v_i^2 \le d \|F\|_{\infty,[-M,M]}^2,
\end{equation}
so that for $C$ large enough,
\begin{align}
	\frac{m_0+m_1}{2}+3v_0 - m_1 \le 3\sqrt{d}\|F\|_{\infty,[-M,M]} - \frac{s\d_{K,M}}2 \le -\frac{s\d_{K,M}}3,
\end{align}
since the assumptions of the theorem imply that
\begin{equation}
	\frac{\sqrt{d}}{s\d_{K,M}} \|F\|_{\infty,[-M,M]} \le A \Big(\frac{s^2}{d}\Big)^{\e_1+\e_2-1/2}.
\end{equation}
Consequently,
\begin{align}
	 \bar{\mu}_1\big( \bF(\bt)\le \frac{m_0+m_1}{2}+3v_0 \big) &\le  \mu_1\big( \bF(\bt) - m_1 \le -\frac{s\d_{K,M}}{3} \big) + e^{-s/16} \\
	 &\le \frac{9v_1^2}{9v_1^2+s^2\d^2_{K,M}} + e^{-s/16},
\end{align}
by Chebyshev-Cantelli's inequality, and the last quantity is smaller than
\begin{equation}
	\frac{9d \|F\|_{\infty,[-M,M]}^2}{9d \|F\|_{\infty,[-M,M]}^2+s^2\d^2_{K,M}} + e^{-s/16}.
\end{equation}
Finally, we use again the fact that $d \|F\|_{\infty,[-M,M]}^2/(s^2\d^2_{K,M}) \le A (d/s^2)^{1-2\e_1-2\e_2}$ with $s^2/d>C$, so that for $C$ large enough, 
\begin{equation}\label{theorem_sparse_lowerbound_2:5}	
\bar{\mu}_1\big( \bF(\bt)\le \frac{m_0+m_1}{2}+3v_0 \big) < \frac15.
\end{equation}

\textbf{Bounding the total-variation distance in~(\ref{egaliteV}):}
We can upper bound the total-variation distance as follows:
\begin{align}
	\mathrm{TV}(\bar{\prob}_0,\bar{\prob}_1) &\le \mathrm{TV}(\bar{\prob}_0,\prob_0) + \mathrm{TV}(\prob_0,\prob_1) + \mathrm{TV}(\prob_1,\bar{\prob}_1) \\
	&\le \sqrt{\chi^2(\prob_0,\prob_1)/2} + \mu_0(\bT^\complement) + \mu_1(\bT^\complement),
\end{align}
where $\bT^\complement$ denotes the complement of $\bT$. 
As before, 
\begin{equation}\label{theorem_sparse_lowerbound_2:6}
	\mu_i(\bT^\complement) \le \prob\Big( \mathcal{B}\big(d,\frac{s}{2d}\big) > s \Big) \le e^{-\frac{s}{16}}.
\end{equation}
Furthermore, since the $\prob_i$'s are product measures, we have
\begin{equation}
	\chi^2(\prob_0,\prob_1) = \Big(1 + \int \frac{(g_1-g_0)^2}{g_0} \Big)^d - 1,
\end{equation}
and by the definition of $g_0,g_1$, 
\begin{equation}
	\int \frac{(g_1-g_0)^2}{g_0} \le \frac{1}{1-\frac{s}{2d}} \Big(\frac{s}{2d}\Big)^2 \int \frac{(\phi_1-\phi_0)^2}{\phi} \le \frac{s^2}{2d^2} \int \frac{(\phi_1-\phi_0)^2}{\phi}.
\end{equation}
Then
\begin{align}
	\int \frac{(\phi_1-\phi_0)^2}{\phi} &= \int e^{\t\t'} \nu_1(d\t)\nu_1(d\t') + \int e^{\t\t'} \nu_0(d\t)\nu_0(d\t') - 2 \int e^{\t\t'} \nu_0(d\t)\nu_1(d\t') \phantom{\sum_{k\ge K+1}}\\
	&= \sum_{k\ge0} \frac1{k!} \Big( \int t^k\nu_1(dt) - \int t^k\nu_0(dt) \Big)^2 \\
	&\le 4\sum_{k\ge K+1} \frac{M^{2k}}{k!},
\end{align}
and the choice of $K$ along with the condition on $M$ imply that $eM^2/K\le1/e$, so that
\begin{equation}
	\int \frac{(\phi_1-\phi_0)^2}{\phi} \le 4\sum_{k\ge K+1} \Big(\frac{eM^2}{k}\Big)^k \le 4\Big(\frac{eM^2}{K}\Big)^K.
\end{equation}
Coming back to the $\chi^2$-distance and using the fact that $1+x\le e^x$ for every $x\in\RR$, we get
\begin{equation}
	\chi^2(\prob_0,\prob_1) \le \exp \Big[  \frac{2s^2}{d}\Big(\frac{eM^2}{K}\Big)^K \Big] - 1.
\end{equation}
Finally, we compute
\begin{align}
	K \log\Big(\frac{eM^2}{K}\Big) =  - e^2\log(s^2/d)\times g\Big(e\log(s^2/d)/M^2\Big),
\end{align}
where 
\begin{equation}
	g(x) = \frac{\log\Big(\frac{x}{\log(x)}\Big)}{\log(x)},
\end{equation}
and it can be shown that $g>0.5$, so that $\chi^2(\prob_0,\prob_1) \le e^{2(d/s^2)^{e^2/2-1}}-1$.  This inequality, combined with \eqref{theorem_sparse_lowerbound_2:6}, yields
\begin{equation} \label{theorem_sparse_lowerbound_2:7} \mathrm{TV}(\bar{\prob}_0,\bar{\prob}_1) <3/5
\end{equation}
 if $C$ and $d$ are large enough. 
 
The proof is completed by gathering  \eqref{theorem_sparse_lowerbound_2:1}, \eqref{m1-m0}, \eqref{theorem_sparse_lowerbound_2:2}, \eqref{egaliteV}, \eqref{theorem_sparse_lowerbound_2:3},   \eqref{theorem_sparse_lowerbound_2:5} and \eqref{theorem_sparse_lowerbound_2:7}. 
 
\subsection{Proof of Theorem~\ref{theorem:minimax}}

If $l \in \{0,\ldots,L+1\}$, then by definition of $K_l$ in~\eqref{parametres}, we have\begin{equation}
	K_l\le \frac{c}4 \log(d) \le \frac{c}4\log(s^2/C) \le \log(s)
\end{equation}
for $c$ small enough. Besides, if $l_0=\big\lfloor \frac{\log_2(4/c)}{2}\big\rfloor +1$, where $\lfloor \cdot \rfloor$ denotes the integer part, then 
\begin{equation}
	\forall l\ge l_0, \quad K_l\ge \log(s^2/d).
\end{equation}
On the other hand, when $k\in\{s,\ldots,d\}$, the quantity $\log(s^2/k)$ ranges from $\log(s^2/d)$ to $\log(s)$ and the consecutive differences satisfy
\begin{equation}
	\log\big(s^2/k\big)-\log\big(s^2/(k+1)\big) = \log(1+1/k) \in [0,1],
\end{equation}
so that for every $l \in \{l_0,\ldots,L+1\}$, there exists an integer $k_l\in\{s,\ldots,d\}$ such that
\begin{equation}
	|K_l-\log(s^2/k_l)|\le 1.
\end{equation}
Now note that $\log(s^2/k_l)\ge \log(C)$, which yields that,  for every $l \in \{l_0,\ldots,L+1\}$,
\begin{equation}
	\frac{K_l}{\log(s^2/k_l)} = 1 + \frac{K_l-\log(s^2/k_l)}{\log(s^2/k_l)} \in \Big[1-\frac1{\log(C)}, 1+\frac1{\log(C)}\Big].
\end{equation}
But for $l\in \{ 0,\ldots, l_0-1\}$, we have 
\begin{equation}
	1 \le \frac{K_l}{K_0} \le \frac{4}{c},
\end{equation}
so that the last two displays, combined with Assumption~(A3), entail that
\begin{equation}
	\max_{l=0,\ldots,L+1} \d^2_{K_l,M_l} \le A \max_{l=0,\ldots,L+1} \d_{\log(s^2/k_l),\sqrt{\log(s^2/k_l)}}^2 \le A \max_{k=s,\ldots,d} \d_{\log(s^2/k),\sqrt{\log(s^2/k)}}^2.
\end{equation}
Finally, we conclude by Assumption~(A3) again, since
\begin{equation}
	\max_{k=s,\ldots,d} \d_{\log(s^2/k),\sqrt{\log(s^2/k)}}^2 \le A \max_{k=s,\ldots,d} \d_{e^2\log(s^2/k),\sqrt{\log(s^2/k)}}^2.
\end{equation}
	
\section{Lemmas}\label{section:lemma}

In the whole section, we denote by $A$ an absolute positive constant that precise value may vary from line to line. 

The following lemma is a direct consequence of Proposition~2 in~\cite{CollierCommingesTsybakov2019}.
\begin{lemma}\label{lemma_coefficients}
	Let $P_{K,M}$ be the polynomial defined in~(\ref{def:PKM}). Then the coefficients $a_{k,K,M}$ in~(\ref{def:coefficients}) satisfy
	\begin{equation}
		|a_{k,K,M}| \le A \|F\|_{\infty,[-M,M]} M^{-k} (1+\sqrt2)^{K}, \quad k=0,\dots,K.
	\end{equation}
\end{lemma}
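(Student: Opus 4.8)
The plan is to reduce the statement to a bound on the monomial coefficients of a polynomial of controlled sup-norm on $[-1,1]$ --- which is the content of Proposition~2 of~\cite{CollierCommingesTsybakov2019} --- and then rescale back to $[-M,M]$. First I would set $Q(X)=P_{K,M}(MX)$, a polynomial of degree at most $K$ whose canonical coefficients are $b_k=a_{k,K,M}M^k$, so that $\|Q\|_{\infty,[-1,1]}=\|P_{K,M}\|_{\infty,[-M,M]}$. Since the zero polynomial belongs to $\mathcal P_K$, we have $\d_{K,M}\le\|F\|_{\infty,[-M,M]}$, hence $\|P_{K,M}\|_{\infty,[-M,M]}\le\|F\|_{\infty,[-M,M]}+\d_{K,M}\le 2\|F\|_{\infty,[-M,M]}$. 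It therefore suffices to prove $|b_k|\le A\|Q\|_{\infty,[-1,1]}(1+\sqrt2)^K$ and then divide by $M^k$.

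To bound the $b_k$, I would pass to the Chebyshev basis, writing $Q=\sum_{j=0}^K c_jT_j$. By orthogonality of the $T_j$ with respect to the weight $(1-x^2)^{-1/2}$ on $[-1,1]$, one has $c_j=\frac{2}{\pi}\int_{-1}^1 Q(x)T_j(x)(1-x^2)^{-1/2}\,dx$ for $j\ge1$ (with the factor $\frac1\pi$ for $j=0$); since $|T_j|\le1$ on $[-1,1]$ and $\int_{-1}^1(1-x^2)^{-1/2}\,dx=\pi$, this gives $|c_j|\le 2\|Q\|_{\infty,[-1,1]}$ for every $j$. Expanding $T_j=\sum_k\tau_{j,k}X^k$ we get $b_k=\sum_{j\ge k}c_j\tau_{j,k}$, whence $|b_k|\le(\max_j|c_j|)\sum_{j=0}^K\sum_k|\tau_{j,k}|\le 2\|Q\|_{\infty,[-1,1]}\sum_{j=0}^K\sum_k|\tau_{j,k}|$.

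The last step is to control $\sum_k|\tau_{j,k}|$, the sum of the absolute values of the coefficients of $T_j$. Because $T_j$ contains only monomials of one parity and its signs alternate, this sum equals $|T_j(\ii)|$; using the identity $T_j\!\big(\tfrac{z+z^{-1}}2\big)=\tfrac{z^j+z^{-j}}2$ with $z=\ii(1+\sqrt2)$, which is a root of $z+z^{-1}=2\ii$ of modulus $1+\sqrt2$, I would conclude $\sum_k|\tau_{j,k}|=|T_j(\ii)|\le(1+\sqrt2)^j$. Summing the geometric series gives $\sum_{j=0}^K(1+\sqrt2)^j\le A(1+\sqrt2)^K$, hence $|b_k|\le A\|Q\|_{\infty,[-1,1]}(1+\sqrt2)^K$, and undoing the substitution $Q(X)=P_{K,M}(MX)$ yields the claim. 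There is no genuine obstacle here: the only delicate point is obtaining the sharp exponential factor $(1+\sqrt2)^K$ rather than a cruder $2^K$- or binomial-type bound, which is precisely what the detour through the Chebyshev basis buys --- and which matters because this factor must later be absorbed against $\d_{K,M}^{-1}$ when Assumption~(A2) is invoked in the proofs of the upper bounds.
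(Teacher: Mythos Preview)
Your proposal is correct and follows exactly the same route as the paper, which simply records the lemma as ``a direct consequence of Proposition~2 in~\cite{CollierCommingesTsybakov2019}'': you perform the same rescaling to $[-1,1]$ and invoke the same coefficient bound, with the added bonus that you supply a clean self-contained proof of that proposition via the Chebyshev expansion and the evaluation $|T_j(\ii)|\le(1+\sqrt2)^j$. One cosmetic point: in the bound $|b_k|\le(\max_j|c_j|)\sum_{j=0}^K\sum_k|\tau_{j,k}|$ you reuse the letter $k$ both for the fixed index and the inner summation variable; renaming the latter avoids any ambiguity.
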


The following lemma is a slight modification of Lemma~1 in~\cite{CaiLow2011}:

\begin{lemma}\label{lemma_mesure}
	Assume that $F$ is continuous on $[-M,M]$, then for every positive integer~$K$, if $\d_{K,M}>0$, there exist measures $\nu_0, \nu_1$ on $[-M,M]$ such that
	\begin{equation}
		\begin{cases}
			\ \int t^l \nu_0(dt)= \int t^l \nu_1(dt), \quad l=0,\ldots,K \\
			\ \int F(t) \nu_0(dt) - \int F(t) \nu_1(dt) = 2\d_{K,M}.
		\end{cases}
	\end{equation}
\end{lemma}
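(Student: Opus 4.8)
The plan is to obtain $\nu_0,\nu_1$ from the classical duality between best uniform polynomial approximation and annihilating signed measures. Work in the Banach space $C([-M,M])$ equipped with the sup-norm; by hypothesis $F$ belongs to this space, $\mathcal{P}_K$ is a closed subspace, and $\mathrm{dist}(F,\mathcal{P}_K)=\d_{K,M}$. The duality theorem for best approximation in a normed space (a direct consequence of the Hahn--Banach theorem) gives
\[
\d_{K,M}\;=\;\mathrm{dist}(F,\mathcal{P}_K)\;=\;\sup\Big\{\,\big|\textstyle\int F\,d\mu\big| \;:\; \mu\in C([-M,M])^{*},\ \|\mu\|\le 1,\ \mu\perp\mathcal{P}_K\,\Big\}.
\]
By the Riesz representation theorem $C([-M,M])^{*}$ is the space of finite signed Borel measures on $[-M,M]$ with the total-variation norm, and $\mu\perp\mathcal{P}_K$ means precisely $\int t^{l}\,d\mu(t)=0$ for $l=0,\dots,K$; since $\mu$ and $-\mu$ are simultaneously admissible, one may drop the absolute value inside the supremum.

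Next I would show the supremum is attained. The unit ball of $C([-M,M])^{*}$ is weak-$*$ compact by Banach--Alaoglu; the maps $\mu\mapsto\int t^{l}\,d\mu$ are weak-$*$ continuous, so $\{\mu:\|\mu\|\le1,\ \mu\perp\mathcal{P}_K\}$ is a weak-$*$ closed, hence weak-$*$ compact, subset of that ball; and $\mu\mapsto\int F\,d\mu$ is weak-$*$ continuous. Therefore there exists a signed measure $\mu^{*}$ with $\|\mu^{*}\|_{\mathrm{TV}}\le1$, $\int t^{l}\,d\mu^{*}=0$ for $l=0,\dots,K$, and (after replacing $\mu^{*}$ by $-\mu^{*}$ if needed, which is where $\d_{K,M}>0$ is used) $\int F\,d\mu^{*}=\d_{K,M}$. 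Moreover $\|\mu^{*}\|_{\mathrm{TV}}=1$: otherwise $\mu^{*}/\|\mu^{*}\|_{\mathrm{TV}}$ would be admissible with value $\d_{K,M}/\|\mu^{*}\|_{\mathrm{TV}}>\d_{K,M}$, contradicting the displayed identity.

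Finally I would extract $\nu_0,\nu_1$ from the Jordan decomposition $\mu^{*}=\mu^{*}_{+}-\mu^{*}_{-}$ into mutually singular nonnegative measures, for which $\mu^{*}_{+}([-M,M])+\mu^{*}_{-}([-M,M])=\|\mu^{*}\|_{\mathrm{TV}}=1$. The moment condition at $l=0$ reads $\mu^{*}_{+}([-M,M])-\mu^{*}_{-}([-M,M])=\int 1\,d\mu^{*}=0$, so $\mu^{*}_{+}([-M,M])=\mu^{*}_{-}([-M,M])=\tfrac12$. Setting $\nu_0=2\mu^{*}_{+}$ and $\nu_1=2\mu^{*}_{-}$ then yields two probability measures on $[-M,M]$ with $\int t^{l}\,d\nu_0-\int t^{l}\,d\nu_1=2\int t^{l}\,d\mu^{*}=0$ for $l=0,\dots,K$ and $\int F\,d\nu_0-\int F\,d\nu_1=2\int F\,d\mu^{*}=2\d_{K,M}$, which is the assertion. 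The only genuinely nontrivial ingredients are the approximation-duality identity and the weak-$*$ compactness argument for attainment of the extremal measure; the rest is bookkeeping, and the proof is exactly that of Lemma~1 in~\cite{CaiLow2011}, the only changes being the symmetric interval $[-M,M]$ and the normalization by $2\d_{K,M}$.
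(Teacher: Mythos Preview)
Your proof is correct and follows essentially the same approach as the paper: Hahn--Banach duality, Riesz representation, then Jordan decomposition of the resulting signed measure, with $\nu_0=2\mu^{*}_{+}$ and $\nu_1=2\mu^{*}_{-}$. The only cosmetic difference is that the paper defines the norm-one functional $T(cF+P)=c\,\d_{K,M}$ explicitly on $\mathrm{span}(\mathcal{P}_K,F)$ and extends it by Hahn--Banach (so attainment comes for free), whereas you invoke the duality formula and then use Banach--Alaoglu to secure a maximizer; both routes are standard and equivalent.
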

	
\begin{proof}
	Denote $\mathcal{C}$ the set of continuous functions on $[-M,M]$ equipped with the uniform norm, and $\mathcal{F}_k$ be the linear space spanned by $\mathcal{P}_K$ (the set of polynomials of degree smaller than $K$) and $F$. Note that $F$ does not belong to $\mathcal{P}_K$, since by assumption, $\d_{K,M}>0$. Then every element $g$ of $\mathcal{F}_K$ can be represented as $g=cF+P$, where $P\in\mathcal{P}_K$ and $c\in\RR$. Then we can define the linear functional $T$ on $\mathcal{F}_K$ by $T(g)=c\d_{K,M}$. We then compute the norm of $T$ defined as
	\begin{equation}
		\|T\| = \sup\{ T(g) \suchthat \|g\|_\infty = 1 \}.
	\end{equation}
	Now, every $g\in\mathcal{F}_K$ satisfying $\|g\|_\infty=1$ can be written as
	\begin{equation}
		g = \frac{cF+P}{\|cF+P\|_\infty}, \quad P\in\mathcal{P}_K,
	\end{equation}
	so that
	\begin{equation}
		\|T\| = \sup_{c,P} \frac{c\d_{K,M}}{\|cF+P\|_\infty} = \sup_P \frac{\d_{K,M}}{\|F-P\|_\infty} = 1
	\end{equation}
	 by definition of $\d_{K,M}$. Then, using Hahn-Banach and Riesz representation theorems, we can extend $T$ on $\mathcal{C}$ without changing its norm, and represent this extension $\tilde{T}$ as
	\begin{equation}
		\tilde{T}(g) =  \int_{-M}^M g(t)\,\tau(dt),
	\end{equation}
	where $\tau$ is a signed measure with total variation $1$. Then, using Jordan decomposition, we can write $\tau$ as a difference of two positive measures
	\begin{equation}
		\tau=\tau^+-\tau^-.
	\end{equation} 
	Denoting $\nu_0=2\tau^+$ and $\nu_1=2\tau^-$, which are probability measures since $2\tau$ has total variation $2$ and $\int_{-M}^M \tau(dt)=0$, the last properties of the lemma follow from the properties of $\tau$.
\end{proof}

The proof of the next lemma can be found in~\cite{CaiLow2011}.

\begin{lemma}\label{lemma_hermite2}
	Let $\t\in \RR$ and $X\sim \calN(\t,1)$. For any $k\in \mathbb N$, the $k$-th Hermite polynomial satisfies
	\begin{align}
				\esp H_k(X) &= \t^k, \\ 
			\esp H_k^2(X) &\le \big(k+\t^2\big)^k. 
	\end{align}
\end{lemma}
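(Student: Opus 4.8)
The identity $\esp H_k(X)=\t^k$ is already recorded just below~\eqref{def:Hermite} and can be reproved by the same computation used for the second moment, so the real content of the lemma is the bound $\esp H_k^2(X)\le(k+\t^2)^k$. I would proceed in two steps: first derive an exact closed form for $\esp H_k^2(X)$ as a polynomial in $\t^2$, then dominate that polynomial term by term by the binomial expansion of $(k+\t^2)^k$.

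For the first step I plan to use the Rodrigues-type relation $H_k(x)e^{-x^2/2}=(-1)^k\frac{d^k}{dx^k}(e^{-x^2/2})$ from~\eqref{def:Hermite}, together with the standard rule $H_k'=kH_{k-1}$, which iterates to $\frac{d^j}{dx^j}H_k=\frac{k!}{(k-j)!}\,H_{k-j}$. Writing the density of $\calN(\t,1)$ as $\frac1{\sqrt{2\pi}}e^{-\t^2/2}e^{\t x}e^{-x^2/2}$,
\[
	\esp H_k^2(X)=\frac{e^{-\t^2/2}}{\sqrt{2\pi}}\int_\RR(-1)^k\Big(\frac{d^k}{dx^k}e^{-x^2/2}\Big)\,H_k(x)\,e^{\t x}\,dx .
\]
Integrating by parts $k$ times transfers all $k$ derivatives onto $H_k(x)e^{\t x}$; the boundary terms vanish because $\frac{d^j}{dx^j}e^{-x^2/2}$ is $e^{-x^2/2}$ times a polynomial, which dominates any polynomial times $e^{\t x}$ at $\pm\infty$. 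Using the Leibniz rule and the formula for $\frac{d^j}{dx^j}H_k$ on $\frac{d^k}{dx^k}(H_k(x)e^{\t x})$, and recognizing each resulting integral as $\esp H_{k-j}(X)=\t^{k-j}$ (the first part of the lemma), a reindexing $i=k-j$ gives
\[
	\esp H_k^2(X)=\sum_{i=0}^{k}\binom{k}{i}\,\frac{k!}{i!}\,\t^{2i}.
\]

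For the second step, compare this with $(k+\t^2)^k=\sum_{i=0}^{k}\binom{k}{i}k^{k-i}\t^{2i}$ coefficient by coefficient: since $\frac{k!}{i!}=(i+1)(i+2)\cdots k$ is a product of $k-i$ integers each at most $k$, we have $\frac{k!}{i!}\le k^{k-i}$, and therefore $\esp H_k^2(X)\le(k+\t^2)^k$. I do not expect a genuine obstacle: the statement is computational, and the only point requiring care is getting the binomial coefficients right in the closed form for $\esp H_k^2(X)$ — once that is correct the termwise domination is immediate. If one prefers to avoid integration by parts, the same closed form drops out of the product of two generating functions via $\esp[e^{sX-s^2/2}e^{tX-t^2/2}]=e^{(s+t)\t+st}$ by reading off the coefficient of $s^kt^k$; there the lone subtlety is the exchange of expectation and series, which is licensed by Cramér's bound $|H_k(x)|\le A\sqrt{k!}\,e^{x^2/4}$ and $\esp e^{X^2/4}<\infty$.
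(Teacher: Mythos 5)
Your proof is correct: the closed form $\esp H_k^2(X)=\sum_{i=0}^{k}\binom{k}{i}\frac{k!}{i!}\,\t^{2i}$ obtained by your integration-by-parts/Leibniz computation (or by the generating-function route) is the standard Hermite moment identity, and the termwise comparison with $(k+\t^2)^k=\sum_{i=0}^{k}\binom{k}{i}k^{k-i}\t^{2i}$ via $k!/i!\le k^{k-i}$ is valid. The paper gives no argument of its own for this lemma --- it simply cites Cai and Low (2011), where the same moment identity underlies the bound --- so your write-up is essentially the standard approach, just made self-contained.
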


\begin{lemma}\label{lemma_CaiLow1}
	Let $\hat{P}_{K,M}$ be defined in~(\ref{parametres}) with $K\le M^2$. If $\xi\sim\calN(0,1)$, then
	\begin{equation}
	\esp \hat{P}_{K,M}^2(\xi) \le A \|F\|^2_{\infty,[-M,M]}\, 6^{K}.
	\end{equation}
\end{lemma}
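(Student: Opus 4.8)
The plan is to expand $\hat{P}_{K,M}(\xi) = \sum_{k=1}^K a_{k,K,M} H_k(\xi)$ and use the Cauchy--Schwarz-type inequality $\esp(\sum_k X_k)^2 \le (\sum_k \sqrt{\esp X_k^2})^2$ (already invoked in the proof of Theorem~\ref{theorem_upperbound_nonsparse}) to reduce to
\begin{equation}
	\esp \hat{P}_{K,M}^2(\xi) \le \Big( \sum_{k=1}^K |a_{k,K,M}|\, \sqrt{\esp H_k^2(\xi)} \Big)^2, \quad \xi\sim\calN(0,1).
\end{equation}
For the Hermite moments at $\t=0$, Lemma~\ref{lemma_hermite2} gives $\esp H_k^2(\xi) \le k^k$, so $\sqrt{\esp H_k^2(\xi)} \le k^{k/2}$. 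For the coefficients, Lemma~\ref{lemma_coefficients} gives $|a_{k,K,M}| \le A\|F\|_{\infty,[-M,M]} M^{-k}(1+\sqrt2)^K$. Combining these two bounds, each summand is at most $A\|F\|_{\infty,[-M,M]}(1+\sqrt2)^K (k^{1/2}/M)^k$.

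The next step is to control $\sum_{k=1}^K (k^{1/2}/M)^k$. Here I would use the hypothesis $K\le M^2$, which forces $k^{1/2}/M \le K^{1/2}/M \le 1$ for every $k\le K$; hence each term is bounded by $1$ and the sum is at most $K$. Absorbing the polynomial factor $K$ into the exponential is the usual move: since $K \le A\, 2^K$ (indeed $K^2 \le A\, c_0^{K}$ for any $c_0>1$), we get $\sum_{k=1}^K (k^{1/2}/M)^k \le A\, 2^K$. Therefore
\begin{equation}
	\Big( \sum_{k=1}^K |a_{k,K,M}|\, \sqrt{\esp H_k^2(\xi)} \Big)^2 \le A\, \|F\|_{\infty,[-M,M]}^2\, (1+\sqrt2)^{2K}\, 2^{2K} = A\, \|F\|_{\infty,[-M,M]}^2\, \big(2(1+\sqrt2)\big)^{2K}.
\end{equation}
Since $(2(1+\sqrt2))^2 = 4(3+2\sqrt2) = 12 + 8\sqrt2 < 24$, one could already conclude with a constant slightly worse than $6^K$; to reach exactly $6^K$ one should be a touch more careful. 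In fact $2(1+\sqrt2) = 2+2\sqrt2 \approx 4.83$, and $(2+2\sqrt2)^2 \approx 23.3$, which is not below $6$; so the clean bound $6^K$ must come from combining the coefficient bound and the moment bound more tightly rather than bounding them separately. The honest route is to note $(1+\sqrt2)^{2K} < 6^K$ (since $(1+\sqrt2)^2 = 3+2\sqrt2 \approx 5.83 < 6$) and to absorb the leftover factors $k^{k/2}M^{-k} \le 1$ together with the polynomial $K$ into the slack $6^K/(3+2\sqrt2)^K = (6/(3+2\sqrt2))^K$, which grows geometrically and thus dominates any polynomial in $K$.

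The main obstacle, then, is purely bookkeeping: arranging the constants so that the final bound reads $6^K$ rather than a larger base. The key inequalities are $(1+\sqrt2)^2 < 6$, the constraint $K\le M^2$ used to make $k^{1/2}/M\le 1$, and the elementary fact that $(6/(3+2\sqrt2))^K$ grows fast enough to swallow the factor $K^2$ (or $K$) left over from the number of terms and the crude Cauchy--Schwarz step. No delicate estimate is needed; once these pieces are assembled the result follows.
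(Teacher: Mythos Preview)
Your argument is correct, but you miss the one simplification that is specific to $\t=0$: the Hermite polynomials are \emph{orthogonal} under $\calN(0,1)$, so
\[
\esp \hat{P}_{K,M}^2(\xi)=\sum_{k=1}^K a_{k,K,M}^2\,\esp H_k^2(\xi)
\]
exactly, with no cross terms. This is what the paper uses. Plugging in Lemmas~\ref{lemma_coefficients} and~\ref{lemma_hermite2} then gives $A\|F\|_{\infty,[-M,M]}^2(1+\sqrt2)^{2K}\sum_{k=1}^K(k/M^2)^k$, and the hypothesis $K\le M^2$ bounds the sum by $K$; since $(1+\sqrt2)^2<6$, the single factor $K$ is absorbed into the slack and the $6^K$ bound follows cleanly. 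Your Cauchy--Schwarz route (the one used in Lemmas~\ref{lemma_CaiLow2} and~\ref{lemma_CaiLowamlior}, where $\t\ne 0$ and orthogonality is unavailable) produces an extra factor $K^2$ instead of $K$, which is the source of your bookkeeping detour; it still works because the geometric slack $(6/(3+2\sqrt2))^K$ swallows any polynomial, but exploiting orthogonality removes the detour entirely.
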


\begin{proof}
	Recall that, for the Hermite polynomials, $\esp (H_k(\xi)H_j(\xi))=0$ if $k\ne j$ and $\xi \sim\calN(0,1)$.
	Using this fact and then Lemmas~\ref{lemma_coefficients} and~\ref{lemma_hermite2} we obtain
	\begin{align}
		\esp \hat{P}_{K,M}^2(\xi) = \sum_{k=1}^K a^2_{k,K,M} \esp H^2_{k}(\xi) \le A \|F\|^2_{\infty,[-M,M]}\, (1+\sqrt2)^{2K} \sum_{k=1}^K (k/M^2)^{k}.
	\end{align}
	Moreover, since $K/M^2 \le 1$, we have $\sum_{k=1}^K (k/M^2)^{k} \le K$. The result follows.
\end{proof}
	
\begin{lemma}\label{lemma_CaiLow2}
	Let $\hat{P}_{K,M}$ be defined in~(\ref{parametres}) with parameters $K=cM^2/8$ and $c\le8$. If $\xi\sim\calN(\t,1)$ with $|\t|\le M$, then
	\begin{align}
		 \esp \hat{P}^2_{K,M}(X) \le A \|F\|_{\infty,[-M,M]}^2\, 12^{K}.
	\end{align}
\end{lemma}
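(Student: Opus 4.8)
The plan is to mimic the proof of Lemma~\ref{lemma_CaiLow1}, but now account for the mean~$\t$ in the expansion of the Hermite moments. Since the Hermite polynomials are not orthogonal under $\calN(\t,1)$ when $\t\ne0$, I would not try to diagonalize the sum; instead I would bound $\esp \hat{P}^2_{K,M}(X)$ by $\big(\sum_{k=1}^K |a_{k,K,M}|\sqrt{\esp H_k^2(X)}\big)^2$, exactly the Cauchy--Schwarz-type inequality $\esp(\sum X_i)^2\le(\sum\sqrt{\esp X_i^2})^2$ already invoked in the proof of Theorem~\ref{theorem_upperbound_nonsparse}.

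Next I would plug in the two estimates I am allowed to use. By Lemma~\ref{lemma_coefficients}, $|a_{k,K,M}|\le A\|F\|_{\infty,[-M,M]}M^{-k}(1+\sqrt2)^K$, and by Lemma~\ref{lemma_hermite2}, $\esp H_k^2(X)\le(k+\t^2)^k$. Since $|\t|\le M$ and $k\le K=cM^2/8\le M^2$ (because $c\le8$), we have $k+\t^2\le M^2+M^2=2M^2$, hence $\sqrt{\esp H_k^2(X)}\le(2M^2)^{k/2}=2^{k/2}M^k$. Multiplying, each term is at most $A\|F\|_{\infty,[-M,M]}(1+\sqrt2)^K 2^{k/2}$, and summing over $k=1,\ldots,K$ gives at most $A\|F\|_{\infty,[-M,M]}(1+\sqrt2)^K\,2^{K/2}\cdot\frac{\sqrt2}{\sqrt2-1}$, i.e. $A\|F\|_{\infty,[-M,M]}\big((1+\sqrt2)\sqrt2\big)^K$. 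Squaring, and noting $\big((1+\sqrt2)\sqrt2\big)^2=2(1+\sqrt2)^2=2(3+2\sqrt2)=6+4\sqrt2\le12$, yields the claimed bound $\esp\hat{P}^2_{K,M}(X)\le A\|F\|_{\infty,[-M,M]}^2\,12^K$.

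There is no real obstacle here; the only point requiring a little care is the constant bookkeeping, namely checking that $k+\t^2\le 2M^2$ uniformly over $k\le K$ and $|\t|\le M$ — this is where the hypotheses $K=cM^2/8$ and $c\le8$ are used — and then verifying $2(1+\sqrt2)^2\le12$ so that the base of the exponential is indeed bounded by $12$. One could get a slightly sharper base ($6+4\sqrt2\approx11.66$), but rounding up to $12$ keeps the statement clean and is harmless since $A$ absorbs everything else. I would write the argument as a three-line display: the Cauchy--Schwarz bound, the substitution of the two lemmas together with $k+\t^2\le2M^2$, and the geometric summation, concluding with the constant comparison.
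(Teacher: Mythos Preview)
Your proposal is correct and follows essentially the same approach as the paper's proof: the same Cauchy--Schwarz-type bound \eqref{secmom}, the same substitution of Lemmas~\ref{lemma_coefficients} and~\ref{lemma_hermite2} together with $k+\t^2\le 2M^2$ (using $K\le M^2$ and $|\t|\le M$), and the same geometric summation leading to $2(1+\sqrt2)^2\le 12$. Your write-up is in fact more explicit about the constant bookkeeping than the paper's two-line display.
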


\begin{proof}
	We use the bound
	\begin{equation}\label{secmom}
		\esp \hat{P}^2_{K,M}(\xi) \le \bigg(\sum_{k=1}^K |a_{k,K,M}| \sqrt{\esp H_{k}^2(\xi)} \bigg)^2.
	\end{equation}
	Thus Lemma~\ref{lemma_hermite2} in particular and the fact that $K\le M^2$ imply that
	\begin{align}
		\esp \hat{P}^2_{K,M}(\xi) \le A \|F\|_{\infty,[-M,M]}^2 (1+\sqrt2)^{2K} \bigg(\sum_{k=1}^K M^{-k} 2^{k/2} M^{k} \bigg)^2 \le A \|F\|_{\infty,[-M,M]}^2\,12^{K}.
	\end{align}
\end{proof}

\begin{lemma}\label{lemma_CaiLowamlior}
	Let $\hat{P}_{K,M}$ be defined in~(\ref{parametres}) with $K=cM^2/8$ and $c\le 8$. If $\xi\sim\calN(\t,1)$ with $|\t|>M$, then
	\begin{align}
		\big| \esp \hat{P}_{K,M}(\xi) \big| &\le A \|F\|_{\infty,[-M,M]}\, 3^{K} e^{c\t^2/16}, \\
		\esp \hat{P}^2_{K,M}(\xi) &\le A \|F\|_{\infty,[-M,M]}^2\, 6^{K} e^{\frac{c\log(1+8/c)}{8}\t^2}.
	\end{align}
\end{lemma}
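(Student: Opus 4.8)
The plan is to prove both inequalities by the same route: expand $\hat{P}_{K,M}$ in the Hermite basis, replace each $|a_{k,K,M}|$ by the bound of Lemma~\ref{lemma_coefficients} and each Hermite moment by Lemma~\ref{lemma_hermite2}, reduce the resulting sum over $k$ to its largest term — which sits at $k=K$ precisely because $|\t|>M$ — and finally convert the leftover power of $\t^2/M^2$ into the advertised exponential factor via the identity $M^2=8K/c$ and the elementary inequality $\log x\le x$.

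For the first bound I would start from $\esp\hat{P}_{K,M}(\xi)=\sum_{k=1}^{K}a_{k,K,M}\esp H_k(\xi)=\sum_{k=1}^{K}a_{k,K,M}\t^{k}$ by Lemma~\ref{lemma_hermite2}, apply the triangle inequality and insert $|a_{k,K,M}|\le A\|F\|_{\infty,[-M,M]}M^{-k}(1+\sqrt2)^{K}$ from Lemma~\ref{lemma_coefficients}, obtaining
\begin{equation}
	\big|\esp\hat{P}_{K,M}(\xi)\big|\le A\|F\|_{\infty,[-M,M]}(1+\sqrt2)^{K}\sum_{k=1}^{K}\Big(\frac{|\t|}{M}\Big)^{k}.
\end{equation}
Since $|\t|/M>1$ the sum is at most $K(|\t|/M)^{K}=K(\t^{2}/M^{2})^{K/2}$, and from $\log x\le x$ together with $K=cM^{2}/8$ one gets $(\t^{2}/M^{2})^{K/2}\le e^{(K/2)(\t^{2}/M^{2})}=e^{c\t^{2}/16}$. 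It then remains to absorb $K(1+\sqrt2)^{K}$ into $A\,3^{K}$, which is legitimate because $(1+\sqrt2)/3<1$.

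For the second bound I would use $\esp\hat{P}^{2}_{K,M}(\xi)\le\big(\sum_{k=1}^{K}|a_{k,K,M}|\sqrt{\esp H_k^{2}(\xi)}\big)^{2}$, bound $\esp H_k^{2}(\xi)\le(k+\t^{2})^{k}$ by Lemma~\ref{lemma_hermite2} and $|a_{k,K,M}|$ again by Lemma~\ref{lemma_coefficients}, so that
\begin{equation}
	\sum_{k=1}^{K}|a_{k,K,M}|\sqrt{\esp H_k^{2}(\xi)}\le A\|F\|_{\infty,[-M,M]}(1+\sqrt2)^{K}\sum_{k=1}^{K}\Big(\frac{k+\t^{2}}{M^{2}}\Big)^{k/2}.
\end{equation}
Because $(k+\t^{2})/M^{2}>1$ the summand is increasing in $k$, so the sum is at most $K\big((K+\t^{2})/M^{2}\big)^{K/2}$; using $M^{2}=8K/c$ I would rewrite $(K+\t^{2})/M^{2}=(c/8)(1+\t^{2}/K)$ and discard the factor $(c/8)^{K/2}\le1$ (valid since $c\le8$). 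Finally, $|\t|>M$ forces $\t^{2}/K>M^{2}/K=8/c$, and since $r\mapsto\log(1+r)/r$ is decreasing on $(0,\infty)$ this gives $\frac{K}{2}\log(1+\t^{2}/K)\le\frac{K}{2}\cdot\frac{c\log(1+8/c)}{8}\cdot\frac{\t^{2}}{K}=\frac{c\log(1+8/c)}{16}\t^{2}$. Squaring the whole estimate and absorbing $K^{2}(1+\sqrt2)^{2K}$ into $A\,6^{K}$ — possible since $(1+\sqrt2)^{2}=3+2\sqrt2<6$ — yields the claim.

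The argument is essentially a bookkeeping exercise; the only step needing a little care is pinning down the exact constant $\frac{c\log(1+8/c)}{8}$ in the exponent, which comes from evaluating the decreasing function $\log(1+r)/r$ at the endpoint $r=8/c$ forced by the hypotheses $|\t|>M$ and $K=cM^{2}/8$, and the constraint $c\le8$ (equivalently $K\le M^{2}$) is exactly what allows the spurious factors such as $(c/8)^{K/2}$ to be dropped. I do not anticipate any genuine obstacle.
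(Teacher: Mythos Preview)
Your proposal is correct and follows essentially the same route as the paper's proof: both inequalities are obtained by expanding in the Hermite basis, plugging in Lemma~\ref{lemma_coefficients} and Lemma~\ref{lemma_hermite2}, and using the identity $M^2=8K/c$ together with the monotonicity of $r\mapsto r^{-1}\log(1+r)$ evaluated at $r=8/c$ to extract the exponential factor. The only cosmetic difference is that for the second inequality the paper bounds each summand $(1+\t^2/k)^{k/2}$ individually (using $\t^2/k\ge M^2/K=8/c$ for every $k\le K$) and then controls $\sum_{k}(k/M^2)^{k/2}\le K$, whereas you first bound the sum by $K$ times its largest term at $k=K$ and only then apply the decreasing-function argument; both orderings lead to the same constants.
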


\begin{proof}
	To prove the first inequality of the lemma, we use Lemma~\ref{lemma_coefficients} to obtain
	\begin{align}
		\big| \esp \hat{P}_{K,M}(\xi) \big| \le A \|F\|_{\infty,[-M,M]} K (1+\sqrt2)^{K} \Big(\frac{|\t|}{M}\Big)^K,
	\end{align}
	and the result follows from 
	\begin{equation}
		K\log(|\t|/M)= \frac{cM^2}8 \log(|\t|/M) \le c\t^2/16.
	\end{equation}		
	
	We now prove the  second  inequality of the lemma. Using \eqref{secmom} and then Lemmas~\ref{lemma_coefficients} and~\ref{lemma_hermite2} we get
	\begin{align}
		\esp \hat{P}^2_{K,M}(\xi) \le A \|F\|_{\infty,[-M,M]}^2\, (1+\sqrt2)^{2K} \Big( \sum_{k=1}^K M^{-k} (k+\t^2)^{k/2} \Big)^2.
	\end{align}
	But as $\frac{\t^2}{k} \ge \frac{M^2}{K} = \frac8{c}\ge 1$, we can use the fact that the function $x\to x^{-1}\log(1+x)$ is decreasing on $\RR_+^*$ to obtain that
	\begin{equation}
		k \log\Big(1+\frac{\t^2}{k}\Big)\le \frac{c\t^2\log(1+8/c)}{8}.	
	\end{equation}
	Therefore,
	\begin{align}
		\esp \hat{P}^2_{K,M}(\xi) &\le A \|F\|_{\infty,[-M,M]}^2\, (1+\sqrt2)^{2K} e^{\frac{c\log(1+8/c)}{8}\t^2}  \bigg( \sum_{k=1}^K (k/M^2)^{k/2} \bigg)^2.
	\end{align}
	Finally, the result follows since $K\le M^2$.
\end{proof}


\acks{We thank A.B. Tsybakov for fruitful discussions during the redaction of this paper. Olivier Collier’s research has been conducted as part of the project Labex MME-DII (ANR11- LBX-0023-01). }




\vskip 0.2in
\bibliography{ref}

\end{document}